\tikzset{
  box/.style={rectangle, draw, rounded corners, align=center, minimum width=3cm, minimum height=1cm},
  arrow/.style={-{Stealth}, thick},
}
\title{Existence and approximate controllability results for time-fractional stochastic Navier-Stokes equations}
\author{Renu Chaudhary, Simeon Reich and Juan J. Nieto}
\begin{document}
\maketitle

\numberwithin{equation}{section} 

\newtheorem{theorem}{Theorem}      
\newtheorem{lemma}{Lemma}          
\newtheorem{definition}{Definition}
\newtheorem{prop}{Proposition}     
\newtheorem{coro}{Corollary}       
\newtheorem{rmk}{Remark}           

\begin{center}
\author{Renu Chaudhary$^{a,b,*}$, Simeon Reich$^a$, Juan J. Nieto$^c$ \\
$^a$Department of Mathematics,
 The Technion — Israel Institute of Technology, 32000 Haifa, Israel\\
 $^b$ Faculty of Applied Natural Sciences and Humanities (FANG), Technical University of Applied Sciences Würzburg-Schweinfurt, 97421 Schweinfurt, Germany\\
$^c$CITMAga, Departamento de Estat\'{\i}stica, An\'alise Matem\'atica e Optimización
Universidade de Santiago de Compostela, 15782 Santiago de Compostela, Spain\\
$^*$Corresponding author email: renu.chaudhary@thws.de}
\end{center}

\begin{abstract}
This paper deals with time-fractional stochastic Navier-Stokes equations, which are characterized by the coexistence of stochastic noise and a fractional power of the Laplacian. We establish sufficient conditions for the existence and approximate controllability of a unique mild solution to time-fractional stochastic Navier-Stokes equations. Using a fixed point technique, we first demonstrate the existence and uniqueness of a mild solution to the equation under consideration. We then establish approximate controllability results by using the concepts of fractional calculus, semigroup theory, functional analysis and stochastic analysis.
\end{abstract}

\section{Introduction}
The origin of the theory of stochastic Navier-Stokes equations (SNSEs, for short) dates back to the work of Landau and Lifshitz \cite{landau} in their 1959 book entitled ``Fluid Mechanics''. Later on, after the pioneering work of Bensoussan and Temam \cite{Bensoussan} on the mathematical viewpoint of SNSEs, these equations have been widely studied and applied to characterize several phenomena in various domains, especially in the fields of fluid dynamics, gas dynamics, nonlinear acoustics, and traffic flow.  The stochasticity introduced in the well-known Navier-Stokes equation might help us comprehend physical processes and the mechanisms of fluid turbulence in a more sophisticated manner. The SNSEs provide a relatively simple method for investigating turbulence, distortion caused by laminar momentum transport, and the decay of dissipation layers formed as a result. In the last few years significant progress has been made in the mathematical and numerical analysis of SNSEs. In particular, Da Prato and Debussche \cite{Prato1} and Brze\'{z}niak \cite{Brzezniak1} studied two-dimensional SNSEs. Kim \cite{kim} established the existence of strong solutions to SNSEs in $R^3$. Recently, Hofmanova \cite{Hofmanova} has derived global existence and non-uniqueness results for three-dimensional SNSEs.

It is well known that fractional derivatives (rather than integer order derivatives) have proven to be highly useful in analyzing many real world problems because of their ability to represent long memory processes; see \cite{kilbas2006theory, podlubny1998fractional, Machado}.
Experiments show that fractional derivatives are among of the top tools for modeling anomalous diffusion processes and viscoelasticity; see, for instance, \cite{caputo, Kai, ding, singh}. Consequently, fractional order Navier-Stokes equations (FNSEs, for short) can be used to model more precisely anomalous diffusion in fractal media. As a result, numerous researchers have conducted in-depth studies regarding FNSEs. For example, Carvalho-Neto and Planas \cite{de} established the existence of mild solution to FNSEs in $R^n$; Momani and Odibat \cite{Momani} studied analytical solutions of FNSEs using different methods; Zhou and Peng \cite{zhou} established the existence and uniqueness of local and global mild solutions to FNSEs. Xu et al. \cite{xu} and Zou et al. \cite{zou1} have analyzed FNSEs driven by fractional Brownian motion. Recently, Han \cite{Han} studied $L_p$-solvability for stochastic time fractional Burgers’ equations driven by multiplicative space-time white noise. The author established conditions for the existence and uniqueness and Hölder regularity of the local and global solution.

Controllability plays an essential role in the design and analysis of control systems. Starting from the seminal work of Kalman \cite{Kalman} in 1963, the notion of controllability helps one to understand whether a particular dynamical control system can be controlled or not, and if it can, what the optimum control should be in order to obtain the intended outcome. Our understanding of the controllability of fractional control systems has greatly increased in recent years as more scientists have become interested in the topic of controllability and its applications in biological science, economics, aerospace engineering and electrical engineering; see, for instance, Barnett \cite{barnett1975introduction}, Curtain and Zwart \cite{curtain2012introduction}, Dauer and Mahmudov \cite{Dauer}, Karthikeyan et al. \cite{karthikeyan}, Sakthivel et al. \cite{sakthivel} and references therein. 
In spite of its being an important topic, only a few papers deal with the controllability of FNSEs. Xi et al. \cite{xi} established approximate controllability results for time-fractional Navier-Stokes equations involving time delay, and Liao et al. \cite{liao} have recently presented global controllability results for Navier-Stokes equations.  However, the existing literature does not address the question of existence and approximate controllability of FNSEs involving randomness and fractional powers of the Laplacian. Therefore it is quite natural and interesting to establish existence and controllability results for time-fractional SNSEs. 

This paper makes significant contributions to this unexplored domain by investigating the existence and approximate controllability of solutions to time-fractional SNSEs. We develop an innovative application of fixed-point theorems tailored to handle the intricacies of fractional stochastic systems, establishing the existence and uniqueness of mild solutions under conditions not previously addressed. By examining the approximate controllability within this complex setting, we provide new theoretical results that expand on existing controllability concepts, accounting for both fractional and random influences. Our results have practical implications for modeling and controlling fluid dynamics phenomena where anomalous diffusion and random effects are prevalent, such as pollutant dispersion in atmospheric sciences and intricately controlled flows in biotechnology.
We mainly focus on the following time-fractional SNSE. This SNSE is the simplest representative equation of the tri-interaction between wave steepening, small dissipation and random perturbations, which are represented as the nonlinearity, the fractional power of the Laplacian and the stochastic process, respectively, in a bounded domain $\mathcal{O}\subset R^d (1\le d\le 3)$ with a smooth boundary $\partial \mathcal{O}$:
\begin{eqnarray}\label{BE1}
\partial^{\eta}_{t} z(t,x) + \nu(-\Delta)^{\frac{\alpha}{2}} z(t,x)-(z(t,x)\cdot \nabla)z(t,x)-\nabla \rho(t,x)=Cv(t,x)+\hbar(t,z(t,x))\frac{dW(t)}{dt}, \,\,
\end{eqnarray}
where $(t,x)\in(0,T]\times \mathcal{O}$,
with the incompressibility condition
\begin{align}\label{BE11}
\nabla \cdot z(t,x)=0, \quad (t,x)\in(0,T]\times \mathcal{O},
\end{align}
the Dirichlet boundary condition
\begin{align}\label{BE12}
z(t,x)=0, \quad (t,x)\in(0, T]\times \partial \mathcal{O},
\end{align}
and the initial conditions
\begin{align}\label{BE13}
z(0,x)=z_0(x), \quad x\in \mathcal{O}.
\end{align}

Here $\eta\in(0,1)$, $z(t,x)$ denotes the velocity field at a point $x\in R^d$,  $\nu>0$ is the viscosity coefficient, $\rho$ denotes the associated pressure field, $C$ represents a linear operator, $v$ denotes the control function, $\hbar$ is a nonlinear function which represents the external force, $W(t)$ denotes a Wiener process, and the operator $(-\Delta)^{\frac{\alpha}{2}}$, $\alpha\in (1,2)$, denotes a fractional power of the Laplacian (see \cite{debbi2016,fraclaplace}).

The time-fractional stochastic Navier-Stokes equations formulated in this work serve as a robust framework for modeling fluid behavior in scenarios where classical assumptions of instantaneous response and deterministic evolution no longer hold. The fractional time derivative effectively captures memory-dependent dynamics, which are characteristic of viscoelastic or anomalously diffusing fluids, while the stochastic perturbations reflect uncertainties and fluctuations inherent in many natural and engineered systems. Such a model becomes particularly relevant in applications ranging from subsurface contaminant transport and atmospheric dispersion of pollutants to microfluidic flows in biomedical devices. In these settings, the combination of long-range temporal correlations and random influences plays a crucial role, and understanding the controllability of such systems can significantly inform the design of efficient monitoring, control, and optimization strategies.

\begin{figure}[h]
\centering
\begin{tikzpicture}[
  node distance=1.6cm,
  block/.style = {rectangle, draw, minimum width=2.2cm, minimum height=1cm, align=center, rounded corners},
  arrow/.style = {thick, -{Latex[length=2mm]}},
  font=\small
  ]

\node[block] (lemma15) at (-6,0) {Lemma 1-5};
\node[block] (lemma6) at (-1.5,0) {Lemma 6};
\node[block] (lemma7)  at (3.0,0)  {Lemma 7};
\node[block] (def12)     at (3,-2) {Defintion 1-2};

\node[block] (assu12)     at (-6,-2) {Assumption 1-2};
\node[block] (thm1) at (-1.5,-2) {Theorem 1};
\node[block] (assum3)  at (-6,-4)  {Assumption 3};

\node[block] (def3)  at (3,-4)  {Definition 3};

\node[block] (thm2) at (-1.5,-4)   {Theorem 2};

\draw[arrow] (lemma15) -- (lemma6);
\draw[arrow] (lemma6) -- (lemma7);

\draw[arrow] (assu12) -- (thm1);
\draw[arrow] (assum3) -- (thm2);
\draw[arrow] (lemma15) -- (thm1);
\draw[arrow] (def3) -- (thm2);
\draw[arrow] (lemma7) -- (thm1);
\draw[arrow] (def12) -- (thm1);
\draw[arrow] (assu12) -- (thm2);
\end{tikzpicture}
\caption{Logical structure of the paper: Lemmas, Definitions, Assumptions, and Theorems with explicit dependencies.}
\end{figure}
\section{Notations and preliminaries}
For $1 < p <\infty$, $L^p(\mathcal{O})$ denotes the Lebesgue space and $W^{k,p}(\mathcal{O})$ denotes the Sobolev spaces with $H^k(\mathcal{O}):=W^{k,2}(\mathcal{O})$.
For $p=2$, $L^2(\mathcal{O}) =: H$ denotes the Hilbert space with inner product $\langle \cdot, \cdot \rangle$.
 Let $C^\infty(\mathcal{O})$ denote the space of all infinitely differentiable functions and $C_0^\infty(\mathcal{O}):=\{z\in (C^\infty(\mathcal{O}))^d: \,\nabla \cdot z=0, \,\, z \,\,\text{has}\,\, \text{compact}\,\,\text{support}\,\, \text{in}\,\, \mathcal{O}\}$. Let $H_0^1(\mathcal{O})$ denote the closure of $C_0^\infty(\mathcal{O})$ in $(L^p(\mathcal{O}))^d$.

Let $(\Sigma,\mu,\{\mu_t\}_{t\geq0},\mathcal{P})$ be a filtered complete probability space with normal filtration $\{\mu_t\}_{t\geq0}$. The Wiener process $\{W(t):t\geq 0\}$ possesses a finite trace linear bounded covariance operator $Q\geq 0$ with $Tr(Q)=\sum_{m=1}^\infty \nu_m=\nu<\infty$ and $Qc_m=\nu_mc_m$, where $\{c_m,m\in \mathbb{N}\}$ denotes a complete orthonormal basis for $H$. If $\{\omega_m\}_{m\in \mathbb{N}}$ is a sequence
of one-dimensional Wiener processes, then
\begin{eqnarray*}
W(t)=\sum_{m=1}^\infty \sqrt{\nu_m}\,\omega_m(t)\,c_m,\quad t\geq 0.
\end{eqnarray*}
$L_0^2=L^2(Q^{\frac{1}{2}}(U),H)$ denotes the Hilbert space of Hilbert-Schmidt operators from $Q^{\frac{1}{2}}(U)$ to $H$ endowed with the norm
\begin{eqnarray*}
\|\psi\|_{L_0^2}=\bigg(\sum_{m=1}^\infty\|\psi\nu_m\|^2\bigg)^{\frac{1}{2}},\quad \psi\in L_0^2.
\end{eqnarray*}
Furthermore, let $L^p(\Sigma;H)$ be the Hilbert space of $H$-valued random variables with norm
\begin{eqnarray*}
\|z(\cdot)\|_{L^p(\Sigma;H)}=(E[\|z(\cdot)\|^p_H])^{1/p},
\end{eqnarray*}
 where $E$ represents the expectation with respect to the measure $\mathcal{P}$ defined by
 \begin{eqnarray*}
 E[\|z(\cdot)\|^p_H] := \int_{\Sigma}\|z(\omega)\|^p_{H}d\mathcal{P}(\omega)<\infty, \quad\omega\in \Sigma.
\end{eqnarray*}

Let $P$ be the Helmholtz projection operator defined on $(L^p(\mathcal{O}))^d$ with range $H_0^1(\mathcal{O})$. Let $A=-\nu P \Delta$ denote the Stokes operator in $H^{1}_{0}(\mathcal{O})$ with domain $D(A)=\{z\in H^{1}_{0}(\mathcal{O})\cap H^2(\mathcal{O}): \,\,z(t,x)=0 \,\, \forall \,\, (t,x)\in(0, T]\times \partial \mathcal{O}\}$.
Since $\mathcal{O}$ is bounded, the inverse $A^{-1}$ exists and is a compact operator on $H$. Furthermore, the fractional powers of A, that is, $A^{\frac{\alpha}{2}}=A_\alpha=\nu P (-\Delta)^{\frac{\alpha}{2}}$, can be defined by
\begin{align*}
A^{\frac{\alpha}{2}}v_m=e_m^{\frac{\alpha}{2}}v_m
\end{align*}
with domain
\begin{align*}
\mathcal{H}^\alpha=D(A^{\frac{\alpha}{2}})=\{z\in H:\|z\|^2_{\mathcal{H}^\alpha}=\sum_{m=1}^\infty e_m^{\frac{\alpha}{2}}u_m^2<\infty\},
\end{align*}
where $u_m=\langle z,v_m \rangle$. We set $\|z\|_{\mathcal{H}^\alpha} := \|A^{\frac{\alpha}{2}} z\|$ and consider the associated dual space ${\mathcal{H}^{-\alpha}}$ with the inverse operator $A^{-\frac{\alpha}{2}}$. The operator $-A_{\alpha}$ generates an analytic semigroup $S_\alpha(t)=e^{-tA_{\alpha}}$ of operators which are compact for $t>0$. The control function $v\in L^p_{\mu}([0,T], U)$, where $U$ is the separable Hilbert space. For simplicity of notation, the set of admissible controls is denoted by $U_{ad}=L^p_{\mu}([0,T], U)$. The mapping $C:U\rightarrow H$ is a bounded linear operator. We also consider the bilinear operator $G(z, w) := -P(z\cdot\nabla) w$ with $D(G) = H^1_0(\mathcal{O})$. In a slight abuse of notation, we write $G(z):= G(z, z)$.\\

Applying the Helmholtz-Hodge projection operator $P$ to the time-fractional SNSE (\ref{BE1})--(\ref{BE13}), we obtain the following abstract equation:
\begin{eqnarray}\label{BE2}
\begin{split}
 ^CD^{\eta} z(t)&=-A_{\alpha} z(t)+G(z(t))+Cv(t)+\hbar(t,z(t))\frac{dW(t)}{dt}, \quad t\in(0,T],\\
z(0)&=z_0,\\
\end{split}
\end{eqnarray}
where $^CD^{\eta}$ denotes the Caputo fractional derivative of order $\eta\in(0,1)$. Here $-A_{\alpha}$ is the infinitesimal generator of an analytic semigroup $\{S_{\alpha}(t):t\geq 0\}$. The initial value $z_0$ is an $H^\alpha$-valued $\mu_0$-measurable random variable which is independent of $W$. Instead of $Pv(t)$ and $P\hbar(t,z(t))$, we will use the notation $v(t)$ and $\hbar(t,z(t))$, respectively.


\begin{definition} \cite{podlubny1998fractional} The Caputo fractional derivative of $z\in C^m([0,T])$ of order $\eta$, $m-1<\eta\leq m$, is defined by
\begin{align*}
^C{D}^{\eta}_{0^+}z(t) := \frac{1}{\Gamma(m-\eta)}\int_0^t(t-r)^{m-\eta-1}\frac{d^m}{ds^m}z(r)dr,
\end{align*}
where $z^{(m-1)}$ is absolutely continuous in every compact interval $[0,T]$, $T>0$.
\end{definition}

\begin{lemma}\cite{zou}
For any $\alpha>0$, the operator $-A_\alpha$ generates an analytic semigroup $S_\alpha(t)=e^{-tA_\alpha}$, $t\geq 0$, on $L^p$.
Moreover, we have
\begin{align*}
\|A_\beta S_\alpha(t)\|_{\mathcal{L}(L^p)}\leq C_{\alpha,\beta}\,t^{-\frac{\beta}{\alpha}},\quad t>0,
\end{align*}
where $\beta\geq0$, the constant $C_{\alpha,\beta}>0$ depends on $\alpha$ and $\beta$, and $\mathcal{L}(L^p)$ denotes the Banach space of all bounded linear operators from $L^p$ into itself.
\end{lemma}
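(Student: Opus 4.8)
The plan is to reduce the lemma to the standard smoothing theory of analytic semigroups generated by fractional powers of a sectorial operator, so that only two structural inputs about the Stokes operator are really needed. First I would recall the properties of $A=-\nu P\Delta$. Since $\mathcal{O}$ is bounded with smooth boundary, $A$ is a positive, densely defined operator on $L^p$ that is sectorial; on the Hilbert space $H=L^2$ it is in addition self-adjoint with compact resolvent. Moreover, the Stokes operator on $L^p$ is known to admit bounded imaginary powers, indeed a bounded $H^\infty$-functional calculus whose angle can be taken arbitrarily small, so its fractional powers $A^{s}$ are well defined for all $s>0$ and obey the law of exponents $(A^{s})^{r}=A^{sr}$. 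These are precisely the facts needed to manipulate $A_\alpha=A^{\alpha/2}$ and $A_\beta=A^{\beta/2}$ below.

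Set $B:=A_\alpha=A^{\alpha/2}$. On $H=L^2$, the operator $A^{\alpha/2}$ is again positive self-adjoint and therefore automatically generates a bounded analytic semigroup for every $\alpha>0$. On $L^p$ I would instead use that $A$ is sectorial of angle that can be taken arbitrarily small; the fractional power $A^{\alpha/2}$ is then sectorial of angle $(\alpha/2)$ times that of $A$, which stays strictly below $\pi/2$ for every $\alpha>0$. Hence $-A^{\alpha/2}$ generates a bounded analytic semigroup $S_\alpha(t)=e^{-tB}$ on $L^p$, which is the first assertion of the lemma.

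For the quantitative bound, I would first invoke the law of exponents to rewrite $A_\beta=A^{\beta/2}=\bigl(A^{\alpha/2}\bigr)^{\beta/\alpha}=B^{\beta/\alpha}$, so that, with $\gamma:=\beta/\alpha\geq0$, the target becomes the classical analytic-semigroup estimate $\|B^{\gamma}e^{-tB}\|_{\mathcal{L}(L^p)}\leq C_\gamma\,t^{-\gamma}$. I would establish this through the Dunford-Riesz contour representation
\begin{align*}
B^{\gamma}e^{-tB}=\frac{1}{2\pi i}\int_{\Gamma}\lambda^{\gamma}e^{-t\lambda}(\lambda+B)^{-1}\,d\lambda,
\end{align*}
where $\Gamma$ is a sectorial contour avoiding the spectrum of $-B$, together with the resolvent bound $\|(\lambda+B)^{-1}\|_{\mathcal{L}(L^p)}\leq M/|\lambda|$ valid on the sector of analyticity. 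The substitution $\mu=t\lambda$ rescales the contour and extracts exactly the factor $t^{-\gamma}$, while the exponential decay of $e^{-\mathrm{Re}\,\mu}$ along $\Gamma$ makes the remaining integral convergent; absorbing all constants into $C_{\alpha,\beta}$ then gives $\|A_\beta S_\alpha(t)\|_{\mathcal{L}(L^p)}\leq C_{\alpha,\beta}\,t^{-\beta/\alpha}$.

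The main obstacle is not the contour estimate, which is routine once the resolvent bound is available, but rather the two structural inputs that make the argument legitimate on $L^p$ rather than only on $L^2$: the analyticity of the semigroup generated by $-A^{\alpha/2}$ and the law of exponents for the fractional powers. Both hinge on the Stokes operator possessing bounded imaginary powers, equivalently a bounded $H^\infty$-calculus of small angle, on $L^p$. On $L^2$ these follow at once from self-adjointness and the spectral theorem, but on general $L^p$ they are nontrivial, and I would supply them by citing the established $L^p$-theory of the Stokes operator (results of Giga-Sohr type).
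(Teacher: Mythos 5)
The paper itself contains no proof of this lemma: it is imported verbatim from the cited reference [zou], so there is no internal argument to compare yours against, and the real question is whether your reconstruction is sound. It essentially is, and it is the standard route: identify $A_\beta=(A_\alpha)^{\beta/\alpha}$ via the law of exponents and then invoke the classical moment estimate $\|B^{\gamma}e^{-tB}\|\leq C_\gamma t^{-\gamma}$ for a sectorial operator $B$ of angle $<\pi/2$, obtained by rescaling the Dunford contour integral. Two refinements would tighten it. First, bounded imaginary powers (or a bounded $H^\infty$-calculus) are more than you need and are the hardest facts you invoke: both the composition rule $(A^{s})^{r}=A^{sr}$ and the angle-scaling of fractional powers hold for \emph{every} sectorial operator, with no BIP hypothesis (Komatsu's theory of fractional powers; see also Haase's book on sectorial functional calculus). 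What you genuinely must cite is Giga's theorem that the Stokes operator on the solenoidal $L^p$ space of a bounded smooth domain is sectorial of angle zero, i.e. $\|(\lambda+A)^{-1}\|_{\mathcal{L}(L^p)}\leq M_\varepsilon/|\lambda|$ on every sector $|\arg\lambda|\leq\pi-\varepsilon$; this is precisely what makes the statement available for \emph{every} $\alpha>0$, since with a sectorial angle merely below $\pi/2$ the fractional power $A^{\alpha/2}$ would generate an analytic semigroup only for $\alpha<2$. Second, in the contour estimate the factor $\lambda^{\gamma}$ is harmless at infinity only because $e^{-t\,\mathrm{Re}\,\lambda}$ decays exponentially along a contour of half-angle $<\pi/2$, and the integral converges at the origin because the integrand behaves like $|\lambda|^{\gamma-1}$ with $\gamma\geq 0$ (the case $\gamma=0$ being just boundedness of the analytic semigroup); stating this explicitly closes the one place where the estimate could be questioned. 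With those citations in place, your argument is a complete and correct substitute for the external result the paper quotes without proof.
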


\begin{lemma}\cite{kruse} \label{Gundys}For any $p\geq2$, $0\leq s_1<s_2\leq T$, and predictable stochastic process $\chi:[0,T]\times \Sigma \rightarrow L_0^2$ such that
 \begin{align*}
E\bigg[(\int_0^T\|\chi(s)\|^2_{L_0^2}ds)^{\frac{p}{2}}\bigg]<\infty,
\end{align*}
we have
 \begin{align*}
E\bigg[\bigg\|\int_{s_1}^{s_2}\chi(s)dW(s)\bigg\|^{p}\bigg]\leq \kappa(p)E\bigg[\bigg(\int_{s_1}^{s_2}\|\chi(s)\|^2_{L_0^2}ds\bigg)^{\frac{p}{2}}\bigg].
\end{align*}
Here the constant
\begin{align*}
\kappa(p)=\bigg(\frac{p}{2}(p-1)\bigg)^{\frac{p}{2}}\bigg(\frac{p}{p-1}\bigg)^{p(\frac{p}{2}-1)}.
\end{align*}
\end{lemma}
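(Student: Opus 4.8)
The plan is to follow the standard route to the sharp Burkholder--Davis--Gundy inequality, combining the infinite-dimensional It\^o formula with Doob's maximal inequality and H\"older's inequality, and to extract the constant $\kappa(p)$ by solving a scalar algebraic inequality at the end. Throughout, write $M(t):=\int_{s_1}^{t}\chi(s)\,dW(s)$ for $t\in[s_1,s_2]$; under the assumed integrability this is a continuous $H$-valued square-integrable martingale with $M(s_1)=0$, and let $M^*:=\sup_{s_1\le t\le s_2}\|M(t)\|$ denote its running maximum. The case $p=2$ is degenerate: there $\kappa(2)=1$ and the claim is exactly the It\^o isometry, so I would dispose of it separately and assume $p>2$ henceforth.

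First I would apply the It\^o formula to the function $f(x)=\|x\|^p$ on $H$ evaluated along $M$. Since $f'(x)=p\|x\|^{p-2}x$ and $f''(x)=p\|x\|^{p-2}I+p(p-2)\|x\|^{p-4}\,x\otimes x$, the bilinear form $f''(x)[h,h]$ is controlled by $p(p-1)\|x\|^{p-2}\|h\|^2$, so the second-order trace term produces a drift bounded by $\tfrac{p(p-1)}{2}\|M(s)\|^{p-2}\|\chi(s)\|_{L_0^2}^2$, while the first-order term is a (local) martingale. Taking expectations removes the martingale part and yields
\[
E\big[\|M(s_2)\|^{p}\big]\le \frac{p(p-1)}{2}\,E\!\left[\int_{s_1}^{s_2}\|M(s)\|^{p-2}\|\chi(s)\|_{L_0^2}^2\,ds\right].
\]

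Next I would bound $\|M(s)\|^{p-2}\le (M^*)^{p-2}$, pull this factor out of the time integral, and apply H\"older's inequality with the conjugate exponents $\tfrac{p}{p-2}$ and $\tfrac{p}{2}$ to factor the right-hand side as $\big(E[(M^*)^{p}]\big)^{(p-2)/p}$ times $\big(E[(\int_{s_1}^{s_2}\|\chi\|_{L_0^2}^2\,ds)^{p/2}]\big)^{2/p}$. The decisive ingredient is then Doob's maximal $L^p$-inequality, $E[(M^*)^p]\le (\tfrac{p}{p-1})^p\,E[\|M(s_2)\|^p]$, which I substitute to express everything through the single quantity $Z:=E[\|M(s_2)\|^p]$. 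Writing $Y:=E[(\int_{s_1}^{s_2}\|\chi\|_{L_0^2}^2\,ds)^{p/2}]$, this gives $Z\le \tfrac{p(p-1)}{2}(\tfrac{p}{p-1})^{p-2}Z^{(p-2)/p}Y^{2/p}$; dividing by $Z^{(p-2)/p}$ and raising to the power $p/2$ produces precisely $Z\le (\tfrac{p}{2}(p-1))^{p/2}(\tfrac{p}{p-1})^{p(p/2-1)}Y=\kappa(p)\,Y$, since $(p-2)p/2=p(p/2-1)$ and $(\tfrac{p(p-1)}{2})^{p/2}=(\tfrac{p}{2}(p-1))^{p/2}$.

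The main obstacle I anticipate is analytic rather than algebraic: rigorously justifying the It\^o formula for $f(x)=\|x\|^p$ in the infinite-dimensional Hilbert-space setting, where $f$ need not be twice continuously differentiable at the origin when $p<4$, and upgrading the local martingale in the first-order term to a true martingale with vanishing expectation. I would handle the regularity by first working with the smooth approximations $f_\varepsilon(x)=(\varepsilon+\|x\|^2)^{p/2}$, applying the It\^o formula to $f_\varepsilon(M)$, and letting $\varepsilon\downarrow 0$. To control the martingale term I would localize with the stopping times $\tau_n=\inf\{t:\|M(t)\|\ge n\}$, establish the estimate for the stopped processes $M(\cdot\wedge\tau_n)$, and pass to the limit $n\to\infty$ via Fatou's lemma, the finiteness of $Y$ guaranteeing both $Z<\infty$ and the legitimacy of dividing by $Z^{(p-2)/p}$ in the final step.
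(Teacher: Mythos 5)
The paper does not prove this lemma at all---it is quoted verbatim from the cited reference (Kruse), so there is no internal proof to compare against. Your argument is correct and is in fact the standard derivation of this sharp Burkholder--Davis--Gundy-type bound found in that reference: the It\^o formula for $\|x\|^p$ (with the bound $D^2f(x)[h,h]\le p(p-1)\|x\|^{p-2}\|h\|^2$), H\"older with exponents $\tfrac{p}{p-2}$ and $\tfrac{p}{2}$, Doob's maximal $L^p$-inequality, and division by $Z^{(p-2)/p}$ reproduce exactly $\kappa(p)=\big(\tfrac{p}{2}(p-1)\big)^{p/2}\big(\tfrac{p}{p-1}\big)^{p(p/2-1)}$, and your handling of the $p=2$ case, the smoothing $f_\varepsilon(x)=(\varepsilon+\|x\|^2)^{p/2}$, and the localization via stopping times addresses the only genuinely delicate points.
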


\begin{definition}\cite{zou}
A $\mu_t$-adapted stochastic process $\{z(t)\}_{t\in[0,T]}$ is said to be a mild solution of $(\ref{BE2})$ if for each $v\in U_{ad}$, 
$\{z(t)\}_{t\in[0,T]}\in C([0,T],\mathcal{H}^\alpha)$ $\mathcal{P}$-a.s., and
\begin{align}\label{mild}
 z(t)=&M_{\eta}(t)z_0+\int_0^t(t-r)^{\eta-1}M_{\eta,\eta}(t-r)[G(z(r)+Cv(r)]dr \nonumber\\
 &\quad+\int_0^t(t-r)^{\eta-1}M_{\eta,\eta}(t-r)\hbar(r,z(r))dW(r),
\end{align}
where $M_{\eta}(t)$ and $M_{\eta,\eta}(t)$ denote the generalized Mittag-Leffler operators given by
\begin{align*}
M_{\eta}(t)=\int_0^\infty K_{\eta}(s)S_{\alpha}(t^\eta s)ds
\end{align*}
and
\begin{align*}
M_{\eta,\eta}(t)=\int_0^\infty \eta sK_{\eta}(s)S_{\alpha}(t^\eta s)ds,
\end{align*}
where $K_{\eta}:\mathbb{C}\rightarrow \mathbb{C}$ is the Mainardi function which is defined by
\begin{align*}
K_{\eta}(z) := \sum_{m=0}^\infty \frac{(-1)^m(z)^m}{m!\Gamma(-\eta m +1-\eta)}
\end{align*}
for each $\eta\in (0,1)$.
\end{definition}

Next, we recall some properties of $M_{\eta}(t)$ and $M_{\eta,\eta}(t)$, which are demonstrated  in \cite{zou}.
\begin{lemma}{\label{inq1}}\cite{zou}
The operators $M_{\eta}(t)$ and $M_{\eta,\eta}(t)$,  $t>0$, are linear bounded operators such that for $0\leq \beta <\alpha <2$, we have
\begin{align*}
\|M_{\eta}(t)z\|_{\mathcal{H}^{\beta}}\leq C_\alpha t^{-\frac{\eta\beta}{\alpha}}\|z\|,\,\,and \,\, \|M_{\eta,\eta}(t)z\|_{\mathcal{H}^{\beta}}\leq C_\eta t^{-\frac{\eta\beta}{\alpha}}\|z\|,
\end{align*}
where
\begin{align*}
C_\alpha= \frac{C_{\alpha,\beta}\Gamma(1-\frac{\beta}{\alpha})}{\Gamma(1-\frac{\eta\beta}{\alpha})}\,\, and\,\,C_\eta=\frac{\eta C_{\alpha,\beta}\Gamma(2-\frac{\beta}{\alpha})}{\Gamma(1+\eta(1-\frac{\beta}{\alpha}))}.
\end{align*}
\end{lemma}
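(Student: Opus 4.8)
The plan is to reduce both estimates to a single scalar moment identity for the Mainardi function, combined with the semigroup bound $\|A_\beta S_\alpha(t)\|_{\mathcal{L}(L^p)}\le C_{\alpha,\beta}\,t^{-\beta/\alpha}$ stated earlier. Since $\|w\|_{\mathcal{H}^\beta}=\|A_\beta w\|$ with $A_\beta=A^{\beta/2}$ closed, I would first move $A_\beta$ inside the defining integral, writing $A_\beta M_\eta(t)z=\int_0^\infty K_\eta(s)\,A_\beta S_\alpha(t^\eta s)z\,ds$ and, analogously, $A_\beta M_{\eta,\eta}(t)z=\int_0^\infty \eta s\,K_\eta(s)\,A_\beta S_\alpha(t^\eta s)z\,ds$. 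Taking norms and applying the semigroup bound with time argument $t^\eta s$ gives $\|A_\beta S_\alpha(t^\eta s)\|\le C_{\alpha,\beta}(t^\eta s)^{-\beta/\alpha}=C_{\alpha,\beta}\,t^{-\eta\beta/\alpha}\,s^{-\beta/\alpha}$, so the factor $t^{-\eta\beta/\alpha}$ immediately factors out of each integral, producing the asserted power of $t$.

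What then remains is to evaluate the two scalar integrals $\int_0^\infty s^{-\beta/\alpha}K_\eta(s)\,ds$ (for $M_\eta$) and $\eta\int_0^\infty s^{1-\beta/\alpha}K_\eta(s)\,ds$ (for $M_{\eta,\eta}$). For this I would invoke the standard moment formula for the Mainardi function, namely $\int_0^\infty s^{p}K_\eta(s)\,ds=\Gamma(p+1)/\Gamma(\eta p+1)$, valid for $p>-1$. Choosing $p=-\beta/\alpha$ yields $\Gamma(1-\tfrac{\beta}{\alpha})/\Gamma(1-\tfrac{\eta\beta}{\alpha})$, which multiplied by $C_{\alpha,\beta}$ gives exactly the constant $C_\alpha$; choosing $p=1-\beta/\alpha$ yields $\Gamma(2-\tfrac{\beta}{\alpha})/\Gamma(1+\eta(1-\tfrac{\beta}{\alpha}))$, which together with the prefactor $\eta C_{\alpha,\beta}$ gives $C_\eta$. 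Both choices satisfy $p>-1$ precisely because $0\le\beta<\alpha$, which is exactly where the hypothesis $\beta<\alpha$ is used to guarantee convergence of the integrals near $s=0$.

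The hard part will not be this final bookkeeping but rather rigorously justifying the interchange of the closed operator $A_\beta$ with the Bochner integral. I would handle this by first verifying that the integrand $K_\eta(s)\,A_\beta S_\alpha(t^\eta s)z$ is Bochner integrable on $(0,\infty)$: the estimate above dominates its norm by $C_{\alpha,\beta}\,t^{-\eta\beta/\alpha}s^{-\beta/\alpha}K_\eta(s)\|z\|$, which is integrable near $0$ since $\beta/\alpha<1$ and near $\infty$ because $K_\eta$ decays superexponentially. Since $M_\eta(t)z=\int_0^\infty K_\eta(s)S_\alpha(t^\eta s)z\,ds$ converges and the image integral converges as well, the closedness of $A_\beta$ permits pulling it through the integral; the identical argument applies to $M_{\eta,\eta}(t)$ with the additional weight $\eta s$, whose decay near infinity is again controlled by $K_\eta$. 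The moment identity for the Mainardi function I would cite from the reference \cite{zou} (or derive from its Laplace transform representation) rather than reprove, so the only genuinely analytic content is the domination-plus-closedness argument that legitimizes the interchange.
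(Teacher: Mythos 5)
Your proof is correct, and there is nothing to compare it against inside the paper: the authors state this lemma without proof, citing it directly from \cite{zou}, and your argument is precisely the standard one used there --- pull $A_\beta$ through the Bochner integral by closedness, apply the smoothing estimate $\|A_\beta S_\alpha(t^\eta s)\|\le C_{\alpha,\beta}\,t^{-\eta\beta/\alpha}s^{-\beta/\alpha}$, and evaluate the resulting Mainardi moments via $\int_0^\infty s^{p}K_\eta(s)\,ds=\Gamma(p+1)/\Gamma(\eta p+1)$, which reproduces both constants $C_\alpha$ and $C_\eta$ exactly. The only step you use tacitly is the nonnegativity of $K_\eta$ on $(0,\infty)$ for $\eta\in(0,1)$, needed to replace $|K_\eta(s)|$ by $K_\eta(s)$ before invoking the moment identity; this is a standard fact but deserves one explicit line.
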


\begin{lemma}{\label{inq2}}\cite{zou}
The operators $M_{\eta}(t)$ and $M_{\eta,\eta}(t)$, $t>0$, are strongly continuous and for any $0<T_0\leq \tau_1 <\tau_2 \leq T$ and $0\leq \beta <\alpha <2$,
we have
\begin{align*}
\|(M_{\eta}(\tau_2)-M_{\eta}(\tau_1))z\|_{\mathcal{H}^{\beta}}\leq C_{\alpha \beta} (\tau_2-\tau_1)^{\frac{\eta\beta}{\alpha}}\|z\|
\end{align*}
and
\begin{align*}
\|(M_{\eta,\eta}(\tau_2)-M_{\eta,\eta}(\tau_1))z\|_{\mathcal{H}^{\beta}}\leq C_{\eta \beta} (\tau_2-\tau_1)^{\frac{\eta\beta}{\alpha}}\|z\|,
\end{align*}
where
\begin{align*}
C_{\alpha \beta}= \frac{\alpha C_{\alpha,\beta}\Gamma(1-\frac{\beta}{\alpha})}{\beta T_0^{\frac{2\eta\beta}{\alpha}}\Gamma(1-\frac{\eta\beta}{\alpha})}\,\, and\,\,C_{\eta\beta}=\frac{\alpha\eta C_{\alpha,\beta}\Gamma(2-\frac{\beta}{\alpha})}{\beta T_0^{\frac{2\eta\beta}{\alpha}}\Gamma(1+\eta(1-\frac{\beta}{\alpha}))}.
\end{align*}
\end{lemma}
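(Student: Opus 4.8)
The plan is to reduce both difference estimates to a single pointwise-in-$s$ bound on the increment of the analytic semigroup $S_{\alpha}$, and then to integrate against the Mainardi kernel. Setting $\gamma:=\eta\beta/\alpha\in(0,1)$ and using linearity of the Bochner integral, I would first write
$$M_{\eta}(\tau_2)-M_{\eta}(\tau_1)=\int_0^\infty K_{\eta}(s)\left(S_{\alpha}(\tau_2^\eta s)-S_{\alpha}(\tau_1^\eta s)\right)ds,$$
together with the analogous identity for $M_{\eta,\eta}$ carrying the extra weight $\eta s$ in the integrand. In this way everything is governed by a bound on $\|A_{\beta}(S_{\alpha}(\tau_2^\eta s)-S_{\alpha}(\tau_1^\eta s))\|$, recalling that $\|\cdot\|_{\mathcal{H}^{\beta}}=\|A_{\beta}\,\cdot\,\|$.

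For the semigroup increment I would use the fundamental theorem of calculus along $r\mapsto S_{\alpha}(r)$, which for $r>0$ is differentiable with $\frac{d}{dr}S_{\alpha}(r)=-A_{\alpha}S_{\alpha}(r)$, so that
$$A_{\beta}\left(S_{\alpha}(\tau_2^\eta s)-S_{\alpha}(\tau_1^\eta s)\right)=-\int_{\tau_1^\eta s}^{\tau_2^\eta s}A_{\alpha+\beta}S_{\alpha}(r)\,dr.$$
Invoking the smoothing estimate $\|A_{\beta}S_{\alpha}(t)\|_{\mathcal{L}(L^p)}\leq C_{\alpha,\beta}t^{-\beta/\alpha}$ with the order $\alpha+\beta$ in place of $\beta$ gives $\|A_{\alpha+\beta}S_{\alpha}(r)\|\leq C_{\alpha,\beta}\,r^{-1-\beta/\alpha}$, and the elementary $r$-integral then yields
$$\|A_{\beta}\left(S_{\alpha}(\tau_2^\eta s)-S_{\alpha}(\tau_1^\eta s)\right)\|\leq C_{\alpha,\beta}\,\frac{\alpha}{\beta}\,s^{-\beta/\alpha}\left(\tau_1^{-\gamma}-\tau_2^{-\gamma}\right).$$
Substituting this back and using the Mainardi moment identity $\int_0^\infty s^{\delta}K_{\eta}(s)\,ds=\Gamma(\delta+1)/\Gamma(\eta\delta+1)$ with $\delta=-\beta/\alpha$ (for $M_{\eta}$) and $\delta=1-\beta/\alpha$ (for $M_{\eta,\eta}$) reproduces exactly the Gamma-quotient factors in $C_{\alpha\beta}$ and $C_{\eta\beta}$; this is the same moment identity that already generated the constants in Lemma \ref{inq1}.

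The decisive step, and the only place where the hypothesis $\tau_1\geq T_0$ is used, is the elementary inequality that turns the difference of negative powers into a H\"older increment. For $\gamma\in(0,1)$ and $T_0\leq\tau_1<\tau_2$ I would estimate
$$\tau_1^{-\gamma}-\tau_2^{-\gamma}=\frac{\tau_2^{\gamma}-\tau_1^{\gamma}}{(\tau_1\tau_2)^{\gamma}}\leq\frac{(\tau_2-\tau_1)^{\gamma}}{T_0^{2\gamma}},$$
where the numerator is controlled by the subadditivity $(a+b)^{\gamma}\leq a^{\gamma}+b^{\gamma}$ applied to $\tau_2^{\gamma}=(\tau_1+(\tau_2-\tau_1))^{\gamma}$, and the denominator by $\tau_1\tau_2\geq T_0^2$. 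Since $2\gamma=2\eta\beta/\alpha$, this is precisely the factor $T_0^{-2\eta\beta/\alpha}$ in the stated constants, and it explains why the estimate is claimed on $[T_0,T]$ rather than up to $t=0$. Collecting the three factors gives the two displayed bounds with $C_{\alpha\beta}$ and $C_{\eta\beta}$ as written.

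Strong continuity is then a by-product: for $\beta>0$ it is immediate from the H\"older bound just established, while continuity in $H$ itself follows from the strong continuity of $S_{\alpha}(\cdot)$ together with the dominated convergence theorem in the defining integrals, the dominating functions being integrable because $\int_0^\infty K_{\eta}(s)\,ds$ and $\int_0^\infty sK_{\eta}(s)\,ds$ are finite and $\{S_{\alpha}(t)\}$ is uniformly bounded on compact time sets. I expect the main technical obstacle to be not any individual estimate but the rigorous justification of moving the closed, unbounded operator $A_{\beta}$ through both the $s$-integral and the $r$-integral. I would resolve this by first taking $z$ in the core $D(A_{\alpha+\beta})$, using closedness of $A_{\beta}$ to commute it with the Bochner integrals, and then passing to general $z\in H$ by density once the uniform operator bound is in hand.
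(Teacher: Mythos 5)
The first thing to note is that the paper contains no proof of this lemma: it is imported verbatim from the cited reference \cite{zou} (``we recall some properties \dots which are demonstrated in \cite{zou}''), so the only possible comparison is with the argument that the stated constants implicitly encode. Judged on that basis, your proof is correct and reconstructs exactly that argument: the factor $\frac{\alpha}{\beta}$ arises from integrating $r^{-1-\beta/\alpha}$ over $[\tau_1^{\eta}s,\tau_2^{\eta}s]$ after writing the semigroup increment via $\frac{d}{dr}S_{\alpha}(r)=-A_{\alpha}S_{\alpha}(r)$; the factor $T_0^{-2\eta\beta/\alpha}$ comes from the algebra $\tau_1^{-\gamma}-\tau_2^{-\gamma}=(\tau_2^{\gamma}-\tau_1^{\gamma})/(\tau_1\tau_2)^{\gamma}\leq(\tau_2-\tau_1)^{\gamma}/T_0^{2\gamma}$, which is the only way the specific shape $T_0^{2\eta\beta/\alpha}$ can appear; and the Gamma quotients come from the Mainardi moment identity $\int_0^\infty s^{\delta}K_{\eta}(s)\,ds=\Gamma(\delta+1)/\Gamma(\eta\delta+1)$ with $\delta=-\beta/\alpha$ and $\delta=1-\beta/\alpha$, exactly as in Lemma \ref{inq1}. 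Two bookkeeping caveats are worth recording. First, the smoothing estimate of Lemma 2.2 is invoked by you at order $\alpha+\beta$, so the constant it actually contributes is $C_{\alpha,\alpha+\beta}$ in that lemma's notation, not $C_{\alpha,\beta}$ as the statement writes; this is a relabeling of constants rather than a mathematical gap, and the imprecision originates in the source statement, not in your argument. Second, your derivation --- like the stated constants themselves, which carry $\beta$ in the denominator --- is meaningful only for $\beta>0$ (at $\beta=0$ the $r$-integral is logarithmic and the claimed bound degenerates), and you correctly handle the $\beta=0$ part of the strong-continuity claim separately by dominated convergence. Your plan for commuting $A_{\beta}$ through the integrals (core, closedness, density) is sound, though slightly more than needed: Hille's theorem for closed operators and Bochner integrals applies directly to every $z\in H$, since $S_{\alpha}(r)$ maps $H$ into $D(A_{\gamma})$ for all $\gamma$ once $r>0$ and the relevant norms are integrable by your own bounds.
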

Now, we define the stochastic controllability operator
$L_T\in \mathcal{L}(U_{ad},L^p(\Sigma,H))$ by
\begin{align*}
L_Tv:=\int_0^T(T-s)^{\eta-1}M_{\eta,\eta}(T-s)Cv(s)ds
\end{align*}
and the corresponding adjoint operator $L_T^*:L^p(\Sigma,H)\rightarrow U_{ad}$ by
\begin{align*}
L_T^*z=C^*M_{\eta,\eta}^*(T-s)E\{z|\mu_t\}.
\end{align*}
Similarly to the Grammian matrix, we have the stochastic Grammian operator
\begin{align*}
\Upsilon_0^T:=L_T(L_T^*)z=\int_0^T(T-s)^{\eta-1}M_{\eta,\eta}(T-s)CC^*M^*_{\eta,\eta}(T-s)E\{z|\mu_s\}ds.
\end{align*}

Next, we define the reachable set $K(T):=\{z(T,z_0,v):v(\cdot)\in U_{ad}\}$ of $(\ref{BE2})$, which is the set of all final states $z$ with initial state $z_0$ and control $v$ at the terminal time $T$ .

\begin{definition} The time-fractional SNSE $(\ref{BE2})$ is called approximately controllable on $[0,T]$ if $\overline{K(T)}=L^p(\Sigma,H)$.
\end{definition}

\begin{lemma}\cite{Dauer}
For any $z_T\in L^p(\Sigma,H)$, there exists $\varphi\in L^p_\mu(\Sigma,L^p((0,T),L^p_0))$ such that $z_T=Ez_T+\int_0^T\varphi(s)dW(s)$.
\end{lemma}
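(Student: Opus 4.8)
The statement is a form of the martingale representation theorem, so my plan is to realize $\varphi$ as the integrand attached to the conditional-expectation martingale of $z_T$. First I would define, for $t\in[0,T]$, the $H$-valued process $M(t):=E\{z_T\mid\mu_t\}$. Since $z_T\in L^p(\Sigma,H)$ is $\mu_T$-measurable and $\{\mu_t\}_{t\geq 0}$ is the normal (augmented) filtration generated by $W$, the process $\{M(t)\}_{t\in[0,T]}$ is a continuous $H$-valued $L^p$-martingale with $M(0)=Ez_T$ (the initial $\sigma$-field being $\mathcal{P}$-trivial up to null sets) and $M(T)=z_T$.

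Next I would invoke the Hilbert-space martingale representation theorem for the $Q$-Wiener process $W$: every continuous square-integrable martingale adapted to the natural filtration of $W$ admits a unique predictable $L_0^2$-valued integrand. Applied to $M$, this produces a predictable process $\varphi:[0,T]\times\Sigma\to L_0^2$ such that
\[
M(t)=M(0)+\int_0^t\varphi(s)\,dW(s),\qquad t\in[0,T],\ \mathcal{P}\text{-a.s.}
\]
Evaluating at $t=T$ and substituting $M(0)=Ez_T$ and $M(T)=z_T$ yields exactly the desired identity $z_T=Ez_T+\int_0^T\varphi(s)\,dW(s)$.

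It then remains to place $\varphi$ in the correct integrability class. Since $E\|z_T\|^p<\infty$ and $\|Ez_T\|^p\le E\|z_T\|^p$ by Jensen's inequality, the stochastic integral $\int_0^T\varphi\,dW=z_T-Ez_T$ belongs to $L^p(\Sigma,H)$. Combining the \emph{lower} Burkholder--Davis--Gundy estimate with Doob's maximal inequality (the two-sided companion of Lemma \ref{Gundys}) then transfers this $L^p$-control onto $\varphi$:
\[
E\bigg[\bigg(\int_0^T\|\varphi(s)\|^2_{L_0^2}\,ds\bigg)^{p/2}\bigg]\le c(p)\,E\bigg[\bigg\|\int_0^T\varphi(s)\,dW(s)\bigg\|^p\bigg]=c(p)\,E\big[\|z_T-Ez_T\|^p\big]<\infty,
\]
so that $\varphi\in L^p_\mu(\Sigma,L^p((0,T),L_0^2))$, as claimed.

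The main obstacle I anticipate is the integrability bookkeeping for $p>2$: the It\^o isometry settles $p=2$ at once, but to reach the stated $L^p$-class one genuinely needs the lower Burkholder bound, whereas Lemma \ref{Gundys} as stated supplies only the upper bound, and one must check that the predictable integrand delivered by the representation theorem inherits the higher moment. A secondary, more structural point worth flagging is the standing requirement that $\{\mu_t\}_{t\geq 0}$ coincide with the augmented natural filtration of $W$; this is precisely the hypothesis under which the representation theorem is available, and without it the integrand $\varphi$ need not exist.
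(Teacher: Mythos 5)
The paper offers no proof of this lemma at all---it is quoted directly from the cited reference \cite{Dauer}---and your argument is precisely the standard martingale-representation proof on which that cited result rests (set $M(t)=E\{z_T\mid\mu_t\}$, apply the Hilbert-space martingale representation theorem for the $Q$-Wiener process, then upgrade integrability via the lower Burkholder--Davis--Gundy bound and Doob's inequality), and it is correct, including the point you rightly flag that the result requires $\{\mu_t\}$ to be the augmented natural filtration of $W$. One small blemish: your final estimate controls $E\big[\big(\int_0^T\|\varphi(s)\|^2_{L_0^2}\,ds\big)^{p/2}\big]$, which places $\varphi$ in $L^p(\Sigma;L^2((0,T);L_0^2))$ rather than in the $L^p$-in-time class the lemma literally names, but that class is an artifact of the paper's loose notation (note the undefined symbol $L^p_0$), and the $L^2$-in-time moment bound you obtain is exactly what is needed where $\varphi$ is used in the approximate controllability argument.
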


\section{Existence and Controllability Results}
Assume the following conditions:
\begin{enumerate}
\item [(1)] The bounded bilinear operator $G:H\rightarrow H^{-1}(\mathcal{O})$ satisfies the conditions
\begin{align*}
\|G(z)\|_{H^{-1}}\leq C_1\|z\|^2
\end{align*}
and
\begin{align*}
\|G(z)-G(w)\|_{H^{-1}}\leq C_2(\|z\|+\|w\|)\|z-w\|,
\end{align*}
where $C_1$ and $C_2$ are positive constants.
\item [(2)] $\hbar:[0,T]\times H\rightarrow L^2_0$ is a measurable function which satisfies the conditions
     \begin{align*}
\|\hbar(t,z(t))\|_{L_0^2}\leq L_1(1+\|z\|)
\end{align*}
and
\begin{align*}
\|\hbar(t,z(t))-\hbar(t,w(t))\|_{L_0^2}\leq L_2(\|z-w\|),
\end{align*}
where $L_1$ and $L_2$ are positive constants.
\item [(3)] The linear deterministic system corresponding to (\ref{BE2}),
\begin{eqnarray}\label{BE3}
\begin{split}
& ^CD^{\eta} z(t)=-A_{\alpha} z(t)+G(z(t))+Cv(t), \quad t\in(0,T],\\
&z(0)=z_0,\\
\end{split}
\end{eqnarray}
is approximately controllable on $[t,T]$, that is, for each $t\in [0,T)$, the operator $\lambda(\lambda I+\Upsilon_0^T)\rightarrow 0$ as $\lambda\rightarrow 0+$ in the strong operator topology.
\end{enumerate}

For any $z_T\in L^p(\Sigma, H^\beta)$, we define the control function
\begin{align*}
v^\lambda(t,z)=&C^*M^*_{\eta,\eta}(T-t)\bigg[(\lambda I+\Upsilon_0^T)^{-1}(Ez_T-M_\eta(T)z_0)+\int_0^t(\lambda I+\Upsilon_0^T)^{-1}\varphi(r)dW(r)\\
&\,-\int_0^t(\lambda I+\Upsilon_0^T)^{-1}M_{\eta,\eta}(T-r)G(z(r))dr-\int_0^t(\lambda I+\Upsilon_0^T)^{-1}M_{\eta,\eta}(T-r)\hbar(r,z(r))dW(r)\bigg].
\end{align*}

\begin{lemma}\label{vlemma}
For $p\geq 2$, $0\leq \beta <\alpha <2$, and for all $z,w\in L^p(\Sigma, H^\beta)$, we have
\begin{align}
&E\|v^\lambda(t,z)-v^\lambda(t,w)\|^p\leq \frac{C_v}{\lambda^p}\int_0^tE\|z(r)-w(r)\|^pdr,\label{v1}\\
&E\|v^\lambda(t,z)\|^p\leq \frac{C_v}{\lambda^p}\bigg(1+\int_0^tE\|z(r)\|^pdr\bigg),\label{v2}
\end{align}
where $C_v$ denotes a constant.
\end{lemma}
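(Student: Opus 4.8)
The plan is to exploit the linearity of $v^\lambda(t,\cdot)$ in its nonlinear data together with the uniform resolvent bound $\|(\lambda I+\Upsilon_0^T)^{-1}\|\le \lambda^{-1}$, which holds because the stochastic Grammian $\Upsilon_0^T$ is a nonnegative self-adjoint operator, so that $\lambda I+\Upsilon_0^T\ge \lambda I$. This single bound is what produces the factor $\lambda^{-p}$ once everything is raised to the $p$-th power. The remaining operators $C^*$ and $M^*_{\eta,\eta}(T-t)$ are bounded (the latter by Lemma \ref{inq1} with $\beta=0$, which gives $\|M_{\eta,\eta}(t)z\|\le C_\eta\|z\|$ uniformly in $t$), so each contributes only a multiplicative constant that will be absorbed into $C_v$.

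For \eqref{v1} I would first subtract: in $v^\lambda(t,z)-v^\lambda(t,w)$ the two terms not depending on the state, namely $(\lambda I+\Upsilon_0^T)^{-1}(Ez_T-M_\eta(T)z_0)$ and $\int_0^t(\lambda I+\Upsilon_0^T)^{-1}\varphi(r)\,dW(r)$, cancel exactly, leaving a deterministic integral built from $G(z)-G(w)$ and an It\^o integral built from $\hbar(\cdot,z)-\hbar(\cdot,w)$. After pulling out the operator norms, I would split the $p$-th power by $\|a+b\|^p\le 2^{p-1}(\|a\|^p+\|b\|^p)$ and treat the two integrals separately. For the deterministic term, apply H\"older's inequality in time, $\big\|\int_0^t f(r)\,dr\big\|^p\le t^{p-1}\int_0^t\|f(r)\|^p\,dr$, then use the smoothing property of $M_{\eta,\eta}$ to pass from $H^{-1}$ (where $G$ takes values) to $H$, which introduces a singularity of order $(T-r)^{-\eta/\alpha}$, and finally invoke the Lipschitz bound of Assumption~(1). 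For the stochastic term, apply the Burkholder--Davis--Gundy-type estimate of Lemma \ref{Gundys} to replace the $p$-th moment of the It\^o integral by $\kappa(p)\,E\big[(\int_0^t\|\cdot\|_{L_0^2}^2\,dr)^{p/2}\big]$, use the uniform bound $\|M_{\eta,\eta}(T-r)\|\le C_\eta$ (no singularity here, since $\hbar$ already takes values in $L_0^2$), H\"older once more to move the power inside, and then the Lipschitz bound of Assumption~(2). Taking expectations and collecting constants yields \eqref{v1}.

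The growth estimate \eqref{v2} follows from the same scheme applied to all four terms: the two state-independent terms are bounded using $z_T\in L^p(\Sigma,H^\beta)$, the boundedness of $M_\eta(T)$, and the integrability of $\varphi$ (again via Lemma \ref{Gundys}), producing the additive constant $1$; the $G$- and $\hbar$-integrals are handled via the growth bounds $\|G(z)\|_{H^{-1}}\le C_1\|z\|^2$ and $\|\hbar(t,z)\|_{L_0^2}\le L_1(1+\|z\|)$, the latter also contributing to the constant term through $(1+\|z\|)^p\le 2^{p-1}(1+\|z\|^p)$.

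The main obstacle is reconciling the superlinear data bounds with the linear right-hand sides asserted in the lemma: the Lipschitz bound for $G$ carries the factor $(\|z\|+\|w\|)$ and its growth bound is quadratic. To obtain the clean expressions $\int_0^t E\|z-w\|^p\,dr$ and $1+\int_0^t E\|z\|^p\,dr$, one must restrict $z,w$ to the fixed closed ball in $C([0,T],\mathcal{H}^\alpha)$ on which the subsequent fixed-point argument is carried out, so that $(\|z\|+\|w\|)$ and $\|z\|$ are bounded by the radius, which is then absorbed into $C_v$. A secondary technical point is verifying that the time singularity produced by the $H^{-1}\to H$ smoothing of $M_{\eta,\eta}$ survives the H\"older step with an exponent still strictly below $1$; this constrains how the $p$-th power is distributed across the integrand and is the place where the condition $\eta/\alpha<1$ (guaranteed by $\eta\in(0,1)$ and $\alpha\in(1,2)$) is used.
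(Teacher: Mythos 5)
Your proposal follows essentially the same route as the paper's own proof: the state-independent terms cancel in the difference, the factor $\lambda^{-p}$ comes from $\|(\lambda I+\Upsilon_0^T)^{-1}\|\le \lambda^{-1}$, the $G$-difference is estimated by H\"older in time combined with the $H^{-1}\to H$ smoothing of $M_{\eta,\eta}$ from Lemma \ref{inq1} (singularity $(T-r)^{-\eta/\alpha}$) and condition (1), and the stochastic term by Lemma \ref{Gundys} with the uniform bound $\|M_{\eta,\eta}\|\le C_\eta$ and condition (2). The ``main obstacle'' you identify is handled in the paper by the very move you propose: the quadratic factor $(\|z\|+\|w\|)$ is absorbed into $C_v$ via $\max_{t\in[0,T]}E\|z(t)\|^p+\max_{t\in[0,T]}E\|w(t)\|^p$, so the paper's ``constant'' likewise depends on the chosen $z,w$ (your fixed-ball formulation is just a cleaner statement of the same device).
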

\begin{proof}
For $z,w\in L^p(\Sigma, H^\beta)$, using the bounds of linear operator $M_{\eta,\eta}$ from Lemma \ref{inq1}, H\"{o}lder's inequality and conditions (1)--(2), we obtain
\begin{align}
E\|&v^\lambda(t,z)-v^\lambda(t,w)\|^p\nonumber\\
&\leq 2^{p-1}\bigg[E\bigg\|C^*M^*_{\eta,\eta}(T-t)\int_0^t(\lambda I+\Upsilon_0^T)^{-1}M_{\eta,\eta}(T-r)[G(z(r))-G(w(r))]dr\bigg\|^p\nonumber\\
&\quad\quad+E\bigg\|C^*M^*_{\eta,\eta}(T-t)\int_0^t(\lambda I+\Upsilon_0^T)^{-1}M_{\eta,\eta}(T-r)[\hbar(r,z(r))-\hbar(r,w(r))]dW(r)\bigg\|^p\bigg]\nonumber\\
&\leq 2^{p-1}\|C^*\|^p\|M^*_{\eta,\eta}(T-t)\|^p\bigg[E\bigg\|\int_0^t(\lambda I+\Upsilon_0^T)^{-1}A_1M_{\eta,\eta}(T-r)A_{-1}[G(z(r))-G(w(r))]dr\bigg\|^p\nonumber\\
&\quad\quad\quad+E\bigg\|\int_0^t(\lambda I+\Upsilon_0^T)^{-1}M_{\eta,\eta}(T-r)[\hbar(r,z(r))-\hbar(r,w(r))]dW(r)\bigg\|^p\bigg]\nonumber\\
&\leq 2^{p-1}\frac{\|C^*\|^p}{\lambda^p}C_\eta^p\bigg[E\bigg\|\int_0^tA_1M_{\eta,\eta}(T-r)A_{-1}[G(z(r))-G(w(r))]dr\bigg\|^p\nonumber\\
&\quad\quad\quad+E\bigg\|\int_0^tM_{\eta,\eta}(T-c)[\hbar(r,z(r))-\hbar(r,w(r))]dW(r)\bigg\|^p\bigg]\nonumber\\
&\leq 2^{p-1}\frac{\|C^*\|^p}{\lambda^p}C_\eta^p[I_1+I_2],\label{L1}
\end{align}
where
\begin{equation*}
I_1=E\bigg\|\int_0^tA_1M_{\eta,\eta}(T-r)A_{-1}[G(z(r))-G(w(r))]dr\bigg\|^p  
\end{equation*}
and
\begin{equation*}
I_2=E\bigg\|\int_0^tM_{\eta,\eta}(T-r)[\hbar(r,z(r))-\hbar(r,w(r))]dW(r)\bigg\|^p.  
\end{equation*}
Using H\"{o}lder's inequality, we obtain
\begin{align}    
I_1&=E\bigg\|\int_0^tA_1M_{\eta,\eta}(T-r)A_{-1}[G(z(r))-G(w(r))]dr\bigg\|^p\nonumber\\
&\quad \leq \bigg(\int_0^t\|M_{\eta,\eta}(T-r)\|^{\frac{p}{p-1}}_{H^1}dr\bigg)^{p-1}\bigg(\int_0^tE\|[G(z(r))-G(w(r))]\|^p_{H^{-1}}dr\bigg)\nonumber\\
&\quad \leq \bigg(\int_0^t C_\eta^{\frac{p}{p-1}}(T-r)^{-\frac{\eta p}{\alpha(p-1)}}dr\bigg)^{p-1}\bigg(\int_0^tC_2^p(\max_{t\in[0,T]}E[\|z(t)\|^p]+\max_{t\in[0,T]}E[\|w(t)\|^p])\nonumber\\
&\quad\quad\quad\quad\quad\quad\quad \times E\|z(r)-w(r)\|^pdr\bigg)\nonumber\\
&\quad \leq C_\eta^pC_2^p\bigg(\max_{t\in[0,T]}E[\|z(t)\|^p]+\max_{t\in[0,T]}E[\|w(t)\|^p]\bigg)\bigg(\int_0^t(T-r)^{-\frac{\eta p}{\alpha(p-1)}}dr\bigg)^{p-1}\nonumber\\
&\quad\quad\quad\quad\quad\quad\quad\times\int_0^tE\|z(r)-w(r)\|^pdr\nonumber\\
&\quad \leq C_\eta^pC_2^p\bigg(\max_{t\in[0,T]}E[\|z(t)\|^p]+\max_{t\in[0,T]}E[\|w(t)\|^p]\bigg)T^{p[
1-\frac{\eta}{\alpha}]-1}\bigg[\frac{p-1}{p[
1-\frac{\eta}{\alpha}]-1}\bigg]^{p-1}\nonumber\\
&\quad\quad\quad\quad\quad\quad\quad\times\int_0^tE\|z(r)-w(r)\|^pdr,\label{L2}
\end{align}
and using the Burkholder-Davis-Gundy inequality from Lemma \ref{Gundys},
\begin{align}
I_2&=E\bigg\|\int_0^tM_{\eta,\eta}(T-r)[\hbar(r,z(r))-\hbar(r,w(r))]dW(r)\bigg\|^p\nonumber\\
&\quad\quad\leq \kappa(p)E\bigg[\bigg(\int_0^t\|M_{\eta,\eta}(T-r)[\hbar(r,z(r))-\hbar(r,w(r))]\|_{L^2_0}^2dr\bigg)^{\frac{p}{2}}\bigg]\nonumber\\
&\quad\quad\leq \kappa(p)\bigg(\int_0^t\|M_{\eta,\eta}(T-r)\|^{\frac{2p}{p-2}}dr\bigg)^{\frac{p-2}{2}}\bigg(\int_0^tE\|[\hbar(r,z(r))-\hbar(r,w(r))]\|_{L^2_0}^pdr\bigg)\nonumber\\
&\quad\quad\leq \kappa(p)C_\eta^pL_2^p\int_0^tE\|z(r)-w(r)\|^pdr.\label{L3}
\end{align}
Using (\ref{L2}) and (\ref{L3}) in (\ref{L1}), we infer that
\begin{align*}
E\|&v^\lambda(t,z)-v^\lambda(t,w)\|^p\leq \frac{C_v}{\lambda^p}\int_0^tE\|z(r)-w(r)\|^pdr,
\end{align*}
where
\begin{align*}
C_v=&2^{p-1}\|C^*\|^pC_\eta^{2p}\bigg\{C_2^p\bigg(\max_{t\in[0,T]}E[\|z(t)\|^p]\\
&\quad\quad+\max_{t\in[0,T]}E[\|w(t)\|^p]\bigg)T^{p[1-
\frac{\eta}{\alpha}]-1}\bigg[\frac{p-1}{p[
1-\frac{\eta}{\alpha}]-1}\bigg]^{p-1}+ \kappa(p)L_2^p\bigg\}.
\end{align*}
Since inequality (\ref{v2}) can be obtained in a similar manner, we omit its proof here.
\end{proof}

For any $\lambda>0$, we define the operator $\mathcal{F}_\lambda (z(t)):L^p(\Sigma, H^\beta)\rightarrow L^p(\Sigma, H^\beta)$ by
\begin{align*}
\mathcal{F}_\lambda (z(t)) := &M_{\eta}(t)z_0+\int_0^t(t-r)^{\eta-1}M_{\eta,\eta}(t-r)[G(z(r))+Cv^\lambda(r,z)]dr\nonumber\\
 &\quad+\int_0^t(t-r)^{\eta-1}M_{\eta,\eta}(t-r)\hbar(r,z(r))dW(r).
\end{align*}

\begin{theorem}\label{MainThm}
If conditions $(1)$ and $(2)$ are satisfied, then the time-fractional SNSE (\ref{BE2}) has a unique mild solution $(z(t))_{t\in[0,T]}$ in $L^p(\Sigma, H^\beta)$ for $p\geq 2$, $\eta p\neq 1$ and $0\leq \beta <\alpha <2$.
\end{theorem}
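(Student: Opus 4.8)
The plan is to exhibit the mild solution as the unique fixed point of the solution operator $\mathcal{F}_\lambda$ defined just before the statement, and to obtain it via the Banach contraction principle. Because the convection operator $G$ has quadratic growth (condition (1)), I would not work on the whole of $C([0,T],L^p(\Sigma,H^\beta))$ but on a closed ball $\mathcal{B}_R:=\{z\in C([0,T],L^p(\Sigma,H^\beta)):\sup_{t\in[0,T]}E\|z(t)\|_{H^\beta}^p\le R\}$, which is a complete metric space under the metric induced by $\sup_t(E\|\cdot\|_{H^\beta}^p)^{1/p}$. Restricting to $\mathcal{B}_R$ keeps the factor $(\|z\|+\|w\|)$ appearing in the Lipschitz bound for $G$ uniformly controlled by $R$, which is what makes the contraction step feasible without assuming $T$ small.

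First I would establish the self-mapping property $\mathcal{F}_\lambda(\mathcal{B}_R)\subseteq\mathcal{B}_R$ for a suitable radius $R$. Splitting $\mathcal{F}_\lambda z$ into its four terms, I would estimate each separately in $L^p(\Sigma,H^\beta)$: the term $M_\eta(t)z_0$ is bounded directly by Lemma \ref{inq1}; the deterministic convolution containing $G$ is handled by using the smoothing of $M_{\eta,\eta}$ encoded in Lemma \ref{inq1} to bridge the regularity gap between the $H^{-1}$-range of $G$ and $H^\beta$ (admissible within the range dictated by $\beta<\alpha<2$), together with condition (1) and Hölder's inequality applied to the kernel; the control term $Cv^\lambda$ is controlled by the a priori bound (\ref{v2}) of Lemma \ref{vlemma}; and the stochastic convolution is estimated by the Burkholder-type inequality of Lemma \ref{Gundys} together with the linear-growth bound in condition (2) and Lemma \ref{inq1}. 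Summing these and choosing $R$ large enough to absorb the quadratic contribution of $G$ yields invariance. Continuity in $t$ of $\mathcal{F}_\lambda z$, so that $\mathcal{F}_\lambda z\in C([0,T],L^p(\Sigma,H^\beta))$, follows from the strong continuity and Hölder estimates of Lemma \ref{inq2} together with dominated convergence.

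Next comes the contraction estimate. For $z,w\in\mathcal{B}_R$ I would bound $E\|\mathcal{F}_\lambda z(t)-\mathcal{F}_\lambda w(t)\|_{H^\beta}^p$ exactly as in the proof of Lemma \ref{vlemma}: the Lipschitz bounds in conditions (1)--(2), the control estimate (\ref{v1}), Lemma \ref{Gundys} for the stochastic part, and Hölder applied to each singular kernel. On $\mathcal{B}_R$ the factor $(\|z\|+\|w\|)$ is dominated by $R$, and each Hölder split pulls the kernel out as a finite constant, leaving the clean bound $E\|\mathcal{F}_\lambda z(t)-\mathcal{F}_\lambda w(t)\|_{H^\beta}^p\le C\int_0^t E\|z(r)-w(r)\|_{H^\beta}^p\,dr$. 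Since every iterate stays in $\mathcal{B}_R$, the constant $C$ is uniform, and iterating gives $\sup_{s\le t}E\|\mathcal{F}_\lambda^n z(s)-\mathcal{F}_\lambda^n w(s)\|_{H^\beta}^p\le \frac{(Ct)^n}{n!}\sup_{s\le t}E\|z(s)-w(s)\|_{H^\beta}^p$, so for $n$ large enough $\mathcal{F}_\lambda^n$ is a strict contraction on $\mathcal{B}_R$; since a power of $\mathcal{F}_\lambda$ is a contraction, $\mathcal{F}_\lambda$ itself has a unique fixed point, which is by construction the unique mild solution of (\ref{BE2}) in $L^p(\Sigma,H^\beta)$.

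The step I expect to be the main obstacle is the treatment of the convection term, for two reasons. First, $G$ is only bounded from $H$ into $H^{-1}$ and is quadratic, so closing the estimates in $H^\beta$ forces me to spend the smoothing of $M_{\eta,\eta}$ to raise regularity across the gap between $H^{-1}$ and $H^\beta$, legitimate only through the weighted bounds of Lemma \ref{inq1}. Second, after this shift the surviving convolution kernel combines the Riemann--Liouville factor $(t-r)^{\eta-1}$ with the algebraic singularity $(t-r)^{-\eta\beta/\alpha}$ of the Mittag--Leffler operator, and the whole must remain Lebesgue integrable after the Hölder split with conjugate exponent $p/(p-1)$ (and $2p/(p-2)$ for the stochastic term when $p>2$). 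This integrability is exactly what the hypotheses $0\le\beta<\alpha<2$ and $\eta p\neq 1$ guarantee, the latter excluding the borderline exponent at which the stochastic-convolution kernel ceases to be integrable. Verifying this integrability while simultaneously keeping the quadratic term small enough for both the invariance and the contraction steps is the delicate part; the control and noise terms are then routine given Lemmas \ref{vlemma} and \ref{Gundys}.
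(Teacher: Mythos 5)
Your overall strategy coincides with the paper's: both arguments obtain the mild solution as the fixed point of $\mathcal{F}_\lambda$ by showing that some iterate $\mathcal{F}_\lambda^n$ is a contraction, via a bound of the form $E\|\mathcal{F}_\lambda z(t)-\mathcal{F}_\lambda w(t)\|^p_{H^\beta}\le C\int_0^t E\|z(r)-w(r)\|^p_{H^\beta}dr$ iterated to produce the factor $\frac{(CT)^n}{n!}$, and both use exactly the same ingredients (Lemmas \ref{inq1}, \ref{inq2}, \ref{Gundys}, \ref{vlemma}, conditions (1)--(2), and H\"older's inequality on the singular kernels). The one structural difference is your restriction to the closed ball $\mathcal{B}_R$. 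The paper instead works on all of $L^p(\Sigma,H^\beta)$: it shows only that $\mathcal{F}_\lambda$ maps that space into itself (finiteness, not a uniform bound), and its contraction ``constant'' $\Phi(\lambda)$ in fact contains the factor $\max_{t}E\|z(t)\|^p_{H^\beta}+\max_{t}E\|w(t)\|^p_{H^\beta}$, so it depends on the pair being compared. Your instinct to introduce $\mathcal{B}_R$ precisely so that this factor becomes uniform (which is what legitimizes the iteration) is sound, and it targets a genuine soft spot in the paper's own argument.

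However, the way you close the invariance step is a genuine gap. You claim $\mathcal{F}_\lambda(\mathcal{B}_R)\subseteq\mathcal{B}_R$ follows by ``choosing $R$ large enough to absorb the quadratic contribution of $G$.'' This is backwards: since $\|G(z)\|_{H^{-1}}\le C_1\|z\|^2$, the convection term in the self-map estimate scales like $R^2$, so the inequality you need has the shape $C_0+C_1T^{\theta}R^2+C_2(1+R)\le R$, which has \emph{no} solution for large $R$; it can hold only under a smallness condition (on $T$, on the data, or a discriminant-type condition such as $4C_0C_1T^{\theta}\le1$), and no such hypothesis appears in the theorem. Worse, the ball taken in the $\sup_t E\|\cdot\|^p_{H^\beta}$ metric does not even control the quantity the quadratic bound produces: condition (1) gives $E\|G(z)\|^p_{H^{-1}}\le C_1^p\,E\|z\|^{2p}$, and a bound $E\|z\|^p\le R$ gives no bound at all on the $2p$-th moment (Jensen's inequality goes the wrong way). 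To repair your scheme you would need the ball in $L^{2p}(\Sigma;H^\beta)$ (or a.s.\ bounded paths) together with a local-in-time or small-data restriction, or else abandon the ball and, as the paper does, prove only that $\mathcal{F}_\lambda$ maps the whole space into itself, accepting the $z,w$-dependent Lipschitz constant. (To be fair, the $2p$-moment issue is intrinsic to the quadratic nonlinearity and silently afflicts the paper's estimates as well, but in your version it is decisive, since the ball invariance is the load-bearing step of your contraction argument.)
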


To prove this result, we use the Banach contraction principle to demonstrate that the operator $\mathcal{F}_\lambda (z(t))$ has a fixed point, which is a mild solution to (\ref{BE2}). To this end, first we prove the following Lemma

\begin{lemma}
 For $p\geq 2$, $0\leq \beta <\alpha <2$, and for any $z\in L^p(\Sigma, H^\beta)$, the operator $\mathcal{F}_\lambda (z(t))$ is continuous on $[0,T]$ in the $L^p$ sense.
\end{lemma}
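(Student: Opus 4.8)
The plan is to show that for any $t_1, t_2 \in [0,T]$ with $t_1 < t_2$, the quantity $E\|\mathcal{F}_\lambda(z(t_2)) - \mathcal{F}_\lambda(z(t_1))\|^p_{\mathcal{H}^\beta}$ tends to $0$ as $t_2 - t_1 \to 0$. Since $\mathcal{F}_\lambda(z(t))$ is defined as a sum of four terms (the Mittag-Leffler initial-value term, the $G$-integral, the control integral involving $Cv^\lambda$, and the stochastic integral), I would split the difference term by term using the elementary inequality $\|\sum_{i=1}^4 a_i\|^p \leq 4^{p-1}\sum_{i=1}^4 \|a_i\|^p$ and estimate each piece separately. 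For the first term I would invoke the strong continuity and Hölder-type estimate from Lemma \ref{inq2}, which directly gives $\|(M_\eta(t_2) - M_\eta(t_1))z_0\| \to 0$.

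First I would handle the deterministic integrals (the $G$-term and the control term). For each of these I would write the difference of integrals over $[0,t_2]$ and $[0,t_1]$ as the sum of an integral over the overlapping region $[0,t_1]$, where the integrand difference is controlled by $|(t_2-r)^{\eta-1}M_{\eta,\eta}(t_2-r) - (t_1-r)^{\eta-1}M_{\eta,\eta}(t_1-r)|$, plus an integral over the new slice $[t_1,t_2]$. The overlap piece is controlled via Lemma \ref{inq2} together with the continuity of $r \mapsto (t-r)^{\eta-1}$, while the slice piece vanishes as $t_2 - t_1 \to 0$ because the integrand is (after applying Hölder and Lemma \ref{inq1}) integrable in $r$, so the integral over a shrinking interval goes to zero. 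Here I would bound $\|G(z(r))\|_{\mathcal{H}^{-1}} \leq C_1\|z(r)\|^2$ from condition (1) and $E\|v^\lambda(r,z)\|^p$ via inequality \eqref{v2} of Lemma \ref{vlemma}, noting that these are uniformly bounded for $z \in L^p(\Sigma,\mathcal{H}^\beta)$.

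For the stochastic integral term I would apply the Burkholder-type inequality of Lemma \ref{Gundys} to control $E\|\cdot\|^p$ by $E[(\int \|\cdot\|^2_{L_0^2}\,dr)^{p/2}]$, again splitting into the overlap region and the new slice, and use the linear growth bound $\|\hbar(r,z(r))\|_{L_0^2} \leq L_1(1+\|z\|)$ from condition (2) together with Lemma \ref{inq1}. The main obstacle I anticipate is the singularity of the kernel $(t-r)^{\eta-1}$ near $r = t$ combined with the operator bound $\|M_{\eta,\eta}(t-r)z\|_{\mathcal{H}^\beta} \leq C_\eta (t-r)^{-\eta\beta/\alpha}\|z\|$, which produces a factor $(t-r)^{\eta-1-\eta\beta/\alpha}$; I must verify that this exponent exceeds $-1$ after the Hölder conjugate is applied, i.e. that $\frac{p}{p-1}(\eta - 1 - \frac{\eta\beta}{\alpha}) > -1$ (equivalently the $\eta p \neq 1$ and $\beta < \alpha$ constraints keep the relevant integrals convergent), so that both the overlap difference estimate and the shrinking-slice estimate are finite and tend to zero.
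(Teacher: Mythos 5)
Your proposal is correct and follows essentially the same route as the paper: the $4^{p-1}$ term-by-term splitting, Lemma \ref{inq2} for the initial-value term, the overlap-plus-slice decomposition of each integral (which, once you separate the kernel difference from the operator difference on the overlap, is exactly the paper's three-piece split), Lemma \ref{Gundys} for the stochastic term, and conditions (1)--(2) together with Lemma \ref{vlemma} for the integrand bounds. Your closing remark about the singularity exponents matches the integrability conditions the paper uses implicitly in its constants.
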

\begin{proof}
For $0\leq \tau_1<\tau_2\leq T$ and a fixed $z\in L^p(\Sigma, H^\beta)$, we have
\begin{align}
E\|&\mathcal{F}_\lambda (z(\tau_2))-\mathcal{F}_\lambda (z(\tau_1))\|^p_{H^\beta}\nonumber\\
&\leq 4^{p-1}\bigg[E\|(M_{\eta}(\tau_2)-M_{\eta}(\tau_1))z_0\|^p_{H^\beta}\nonumber\\
&\quad +E\|\int_0^{\tau_2}(\tau_2-r)^{\eta-1}M_{\eta,\eta}(\tau_2-r)G(z(r))dr-\int_0^{\tau_1}(\tau_1-r)^{\eta-1}M_{\eta,\eta}(\tau_1-r)G(z(r))dr\|^p_{H^\beta}\nonumber\\
&\quad +E\|\int_0^{\tau_2}(\tau_2-r)^{\eta-1}M_{\eta,\eta}(\tau_2-r)Cv^\lambda(r,z)dr-\int_0^{\tau_1}(\tau_1-r)^{\eta-1}M_{\eta,\eta}(\tau_1-r)Cv^\lambda(r,z)dr\|^p_{H^\beta}\nonumber\\
 &\quad+E\|\int_0^{\tau_2}(\tau_2-r)^{\eta-1}M_{\eta,\eta}(\tau_2-r)\hbar(r,z(r))dW(r)\nonumber\\
 &\quad\quad\quad\quad-\int_0^{\tau_1}(\tau_1-r)^{\eta-1}M_{\eta,\eta}(\tau_1-r)\hbar(r,z(r))dW(r)\|^p_{H^\beta}\bigg]\nonumber\\
 &\leq 4^{p-1}\bigg[\sum_{j=1}^4 J_j\bigg],\label{cts}
\end{align}
where
\begin{align*}
J_1=E[\|(M_{\eta}(\tau_2)-M_{\eta}(\tau_1))z_0\|^p_{H^\beta}],
\end{align*}
\begin{align*}
J_2&=E\|\int_0^{\tau_2}(\tau_2-r)^{\eta-1}M_{\eta,\eta}(\tau_2-r)G(z(r))dr-\int_0^{\tau_1}(\tau_1-r)^{\eta-1}M_{\eta,\eta}(\tau_1-r)G(z(r))dr\|^p_{H^\beta}\\
&\leq 3^{p-1}\bigg[E[\|\int_0^{\tau_1}(\tau_1-r)^{\eta-1}[M_{\eta,\eta}(\tau_2-r)-M_{\eta,\eta}(\tau_1-r)]G(z(r))dr\|^p_{H^\beta}\\
&\quad +E[\|\int_0^{\tau_1}[(\tau_2-r)^{\eta-1}-(\tau_1-r)^{\eta-1}]M_{\eta,\eta}(\tau_2-r)G(z(r))dr\|^p_{H^\beta}]\\
&\quad +E[\|\int_{\tau_1}^{\tau_2}(\tau_2-r)^{\eta-1}M_{\eta,\eta}(\tau_2-r)G(z(r))dr\|^p_{H^\beta}\bigg]\\
&=3^{p-1}(J_{21}+J_{22}+J_{23}),
\end{align*}
\begin{align*}
J_{3}&=E\|\int_0^{\tau_2}(\tau_2-r)^{\eta-1}M_{\eta,\eta}(\tau_2-r)Cv^\lambda(r,z)]dr-\int_0^{\tau_1}(\tau_1-c)^{\eta-1}M_{\eta,\eta}(\tau_1-r)Cv^\lambda(r,z)]dr\|^p_{H^\beta}\\
&\leq 3^{p-1}\bigg[E[\|\int_0^{\tau_1}(\tau_1-r)^{\eta-1}[M_{\eta,\eta}(\tau_2-r)-M_{\eta,\eta}(\tau_1-r)]Cv^\lambda(r,z)dr\|^p_{H^\beta}\\
&\quad +E[\|\int_0^{\tau_1}[(\tau_2-r)^{\eta-1}-(\tau_1-r)^{\eta-1}]M_{\eta,\eta}(\tau_2-r)Cv^\lambda(r,z)dr\|^p_{H^\beta}]\\
&\quad +E[\|\int_{\tau_1}^{\tau_2}(\tau_2-r)^{\eta-1}M_{\eta,\eta}(\tau_2-r)Cv^\lambda(r,z)dr\|^p_{H^\beta}\bigg]\\
&=3^{p-1}(J_{31}+J_{32}+J_{33}),
\end{align*}
and
\begin{align*}
J_{4}&=E\|\int_0^{\tau_2}(\tau_2-r)^{\eta-1}M_{\eta,\eta}(\tau_2-r)\hbar(r,z(r))dW(r)\\
&\quad\quad\quad\quad-\int_0^{\tau_1}(\tau_1-r)^{\eta-1}M_{\eta,\eta}(\tau_1-r)\hbar(r,z(r))dW(r)\|^p_{H^\beta}\\
&\leq 3^{p-1}\bigg[E[\|\int_0^{\tau_1}(\tau_1-r)^{\eta-1}[M_{\eta,\eta}(\tau_2-r)-M_{\eta,\eta}(\tau_1-r)]\hbar(r,z(r))dW(r)\|^p_{H^\beta}\\
&\quad +E[\|\int_0^{\tau_1}[(\tau_2-r)^{\eta-1}-(\tau_1-r)^{\eta-1}]M_{\eta,\eta}(\tau_2-r)\hbar(r,z(r))dW(r)\|^p_{H^\beta}]\\
&\quad +E[\|\int_{\tau_1}^{\tau_2}(\tau_2-r)^{\eta-1}M_{\eta,\eta}(\tau_2-r)\hbar(r,z(r))dW(r)\|^p_{H^\beta}\bigg]\\
&=3^{p-1}(J_{41}+J_{42}+J_{43}).
\end{align*}
Using Lemma \ref{inq2}, we see that
\begin{align*}
J_1\leq C_{\alpha\beta}^p(\tau_2-\tau_1)^{\frac{p\eta\beta}{\alpha}}E[\|z_0\|^p].
\end{align*}
Next, using Lemma \ref{inq1}, Lemma \ref{inq2}, condition (1) and H\"{o}lder's inequality, we find that
\begin{align*}
J_{21}&=E[\|\int_0^{\tau_1}(\tau_1-r)^{\eta-1}A_\beta[M_{\eta,\eta}(\tau_2-r)-M_{\eta,\eta}(\tau_1-r)]G(z(r))dr\|^p]\\
&\quad\leq C_{\eta\beta}^p(\tau_2-\tau_1)^{\frac{p\eta(\beta+1)}{\alpha}}\bigg(\int_0^{\tau_1}(\tau_1-r)^{\frac{(\eta-1)p}{p-1}}dr\bigg)^{p-1}\bigg(\int_0^{\tau_1}E\|G(z(r))\|^p_{H^{-1}}dr\bigg)\\
&\quad\leq C_{\eta\beta}^pC_1^pT^{\eta p}\bigg[\frac{p-1}{\eta p-1}\bigg]^{p-1}\sup_{t\in[0,T]}E[\|z(r)\|^{2p}](\tau_2-\tau_1)^{\frac{p\eta(\beta+1)}{\alpha}},
\end{align*}
\begin{align*}
J_{22}&=E[\|\int_0^{\tau_1}[(\tau_2-r)^{\eta-1}-(\tau_1-r)^{\eta-1}]A_\beta M_{\eta,\eta}(\tau_2-r)G(z(r))dr\|^p]\\
&\leq C_\eta^p\bigg(\int_0^{\tau_1}\bigg\{[(\tau_2-r)^{\eta-1}-(\tau_1-r)^{\eta-1}](\tau_2-r)^{\frac{-(\beta+1)\eta}{\alpha}}\bigg\}^{\frac{p}{p-1}}dr\bigg)^{p-1}\bigg(\int_0^{\tau_1}E[\|G(z(r))\|^p_{H^{-1}}]dr\bigg)\\
&\leq 2C_\eta^pC_1^pT\bigg\{\frac{p-1}{p[\eta-\frac{\eta(\beta+1)}{\alpha}]-1}\bigg\}^{p-1}\sup_{t\in[0,T]}E[\|z(r)\|^{2p}](\tau_2-\tau_1)^{\frac{p\eta(\alpha-\beta-1)-\alpha}{\alpha}},
\end{align*}
and
\begin{align*}
J_{23}&=E[\|\int_{\tau_1}^{\tau_2}(\tau_2-r)^{\eta-1}M_{\eta,\eta}(\tau_2-r)G(z(r))dr\|^p_{H^\beta}\bigg]\\
&\leq C_\eta^p\bigg(\int_{\tau_1}^{\tau_2}[(\tau_2-r)^{\eta-1-\frac{\eta(\beta+1)}{\alpha}}]^{\frac{p}{p-1}}dr\bigg)^{p-1}\int_{\tau_1}^{\tau_2}E[\|G(z(r))\|^p_{H^{-1}}]dr\\
&\leq C_\eta^pC_1^p\bigg\{\frac{p-1}{p[\eta-\frac{\eta(\beta+1)}{\alpha}]-1}\bigg\}^{p-1}\sup_{t\in[0,T]}E[\|z(r)\|^{2p}](\tau_2-\tau_1)^{\frac{p\eta}{\alpha}(\alpha-\beta-1)}.
\end{align*}
Next, using Lemma \ref{inq1}, Lemma \ref{inq2}, and Lemma \ref{vlemma}, we obtain
\begin{align*}
J_{31}&=\bigg[E[\|\int_0^{r_1}(\tau_1-r)^{\eta-1}[M_{\eta,\eta}(\tau_2-r)-M_{\eta,\eta}(\tau_1-r)]Cv^\lambda(r,z)dr\|^p_{H^\beta}\bigg]\\
&\leq C_{\eta\beta}^p(\tau_2-\tau_1)^{\frac{p\eta\beta}{\alpha}}(\int_0^{\tau_1}(\tau_1-r)^{\frac{(\eta-1)p}{p-1}}dr)^{p-1}\bigg(\int_0^{\tau_1}E\|Cv^\lambda(r,z)\|^pdr\bigg)\\
&\quad\leq C_{\eta\beta}^p\|C\|^p\frac{C_v}{\lambda^p}T^{\eta p}\bigg[\frac{p-1}{\eta p-1}\bigg]^{p-1}[1+\int_0^tE[\|z(r)\|^{p}dr](\tau_2-\tau_1)^{\frac{p\eta\beta}{\alpha}},
\end{align*}
\begin{align*}
J_{32}&=E[\|\int_0^{\tau_1}[(\tau_2-r)^{\eta-1}-(\tau_1-r)^{\eta-1}]M_{\eta,\eta}(\tau_2-r)Cv^\lambda(r,z)dr\|^p_{H^\beta}]\\
&\leq C_\eta^p\bigg(\int_0^{\tau_1}\bigg\{[(\tau_2-r)^{\eta-1}-(\tau_1-r)^{\eta-1}](\tau_2-r)^{\frac{-(\beta+1)\eta}{\alpha}}\bigg\}^{\frac{p}{p-1}}dr\bigg)^{p-1}\bigg(\int_0^{\tau_1}E[\|Cv^\lambda(r,z)\|^p]dr\bigg)\\
&\leq C_\eta^p\|C\|^p\frac{C_v}{\lambda^p}T\bigg\{\frac{p-1}{p[\eta-\frac{\eta(\beta+1)}{\alpha}]-1}\bigg\}^{p-1}[1+\int_0^tE[\|z(r)\|^{p}dr](\tau_2-\tau_1)^{\frac{p\eta(\alpha-\beta-1)-\alpha}{\alpha}},
\end{align*}
and
\begin{align*}
J_{33}&=E[\|\int_{\tau_1}^{\tau_2}(\tau_2-r)^{\eta-1}M_{\eta,\eta}(\tau_2-r)Cv^\lambda(r,z)dr\|^p_{H^\beta}]\\
&\leq C_\eta^p\bigg(\int_{\tau_1}^{\tau_2}[(\tau_2-r)^{\eta-1-\frac{\eta(\beta+1)}{\alpha}}]^{\frac{p}{p-1}}dr\bigg)^{p-1}\int_{\tau_1}^{\tau_2}E[\|Cv^\lambda(r,z)\|^p]dr\\
&\leq C_\eta^p\|C\|^p\frac{C_v}{\lambda^p}\bigg\{\frac{p-1}{p[\eta-\frac{\eta(\beta+1)}{\alpha}]-1}\bigg\}^{p-1}[1+\int_0^tE[\|z(r)\|^{p}dc](\tau_2-\tau_1)^{\frac{p\eta}{\alpha}(\alpha-\beta-1)}.
\end{align*}
Using Lemma \ref{Gundys} and condition $(2)$, we get
\begin{align*}
J_{41}&=E\bigg[\|\int_0^{\tau_1}(\tau_1-r)^{\eta-1}[M_{\eta,\eta}(\tau_2-r)-M_{\eta,\eta}(\tau_1-r)]\hbar(r,z(r))dW(r)\|^p_{H^\beta}\bigg]\\
&\leq \kappa(p)E\bigg[\bigg(\int_0^{\tau_1}\|(\tau_1-r)^{\eta-1}A_\beta[M_{\eta,\eta}(\tau_2-r)-M_{\eta,\eta}(\tau_1-r)]\|^2\|\hbar(r,z(r))\|_{L^2_0}^2dr\bigg)^{\frac{p}{2}}\bigg]\\
&\leq \kappa(p)C_{\eta\beta}^p(\tau_2-\tau_1)^{\frac{p\eta\beta}{\alpha}}\bigg(\int_0^{\tau_1}(\tau_1-r)^{\frac{2p(\eta-1)}{p-2}}dr\bigg)^{\frac{p-2}{2}}\int_0^{\tau_1}E\|\hbar(r,z(r))\|^p_{L^2_0}dr\\
&\leq \kappa(p)C_{\eta\beta}^pL_1^pT^{\frac{2p\eta-p-1}{2}}\bigg[\frac{p-1}{2p\eta-p-2}\bigg]^{p-1}[1+\sup_{t\in[0,T]}E[\|z(t)\|^p]](\tau_2-\tau_1)^{\frac{p\eta\beta}{\alpha}},
\end{align*}
\begin{align*}
J_{42}&=E\bigg[\bigg\|\int_0^{\tau_1}[(\tau_2-r)^{\eta-1}-(\tau_1-r)^{\eta-1}]M_{\eta,\eta}(\tau_2-r)\hbar(r,z(r))dW(r)\bigg\|^p_{H^\beta}\bigg]\\
&\leq \kappa(p)E\bigg[\bigg(\int_0^{\tau_1}\|[(\tau_2-r)^{\eta-1}-(\tau_1-r)^{\eta-1}]A_\beta M_{\eta,\eta}(\tau_2-r)\|^2\|\hbar(r,z(r))\|^2_{L^2_0}dr\bigg)^{\frac{p}{2}}\\
&\leq \kappa(p)C_\eta^p\bigg(\int_0^{\tau_1}\{[(\tau_2-r)^{\eta-1}-(\tau_1-r)^{\eta-1}](\tau_2-r)^{\frac{-\eta\beta}{\alpha}}\}^{\frac{2p}{p-2}}dr\bigg)^{\frac{p-2}{2}}\int_0^{\tau_1}E\|\hbar(r,z(r))\|^p_{L^2_0}dr\\
&\leq \kappa(p)C_\eta^pL_1^pT\bigg[\frac{\alpha(p-2)}{2p\eta(\alpha-\beta)-(p+2)\alpha}\bigg]^{\frac{p-2}{2}}[1+\sup_{t\in[0,T]}E\|z(t)\|^p](\tau_2-\tau_1)^{\frac{2p\eta(\alpha-\beta)-(p+2)\alpha}{2\alpha}},
\end{align*}
and
\begin{align*}
J_{43}&=E\bigg[\bigg\|\int_{\tau_1}^{\tau_2}(\tau_2-r)^{\eta-1}M_{\eta,\eta}(\tau_2-r)\hbar(r,z(r))dW(r)\bigg\|^p_{H^\beta}\bigg]\\
&\leq \kappa(p)E\bigg[\bigg(\int_{\tau_1}^{\tau_2}\|(\tau_2-r)^{\eta-1}A_\beta M_{\eta,\eta}(\tau_2-r)\|^2\|\hbar(r,z(r))\|^2_{L^2_0}dr\bigg)^{\frac{p}{2}}\bigg]\\
&\leq \kappa(p)C_\eta^p\bigg(\int_{\tau_1}^{\tau_2}[(\tau_2-r)^{\eta-1-\frac{\eta\beta}{\alpha}}]^{\frac{2p}{p-2}}dr\bigg)^{\frac{p-2}{2}}\int_{\tau_1}^{\tau_2}E\|\hbar(r,z(r))\|^p_{L^2_0}dr\\
&\leq \kappa(p)C_\eta^pL_1^p\bigg[\frac{\alpha(p-2)}{2p\eta(\alpha-\beta)-(p+2)\alpha}\bigg]^{\frac{p-2}{2}}[1+\sup_{t\in[0,T]}E\|z(t)\|^p](\tau_2-\tau_1)^{\frac{2p\eta(\alpha-\beta)-p\alpha}{2\alpha}}.
\end{align*}
Combining the above inequalities and plugging them into (\ref{cts}), we see that 
$$E\|\mathcal{F}_\lambda (z(\tau_2))-\mathcal{F}_\lambda (z(\tau_1))\|^p_{H^\beta}\rightarrow 0 \;\text{as}\; \tau_2-\tau_1\rightarrow 0.$$ Hence $\mathcal{F}_\lambda(z(t))$ is continuous on $[0,T]$.\\

\textbf{Proof of Theorem \ref{MainThm}}
We first show that $\mathcal{F}_\lambda$ maps $L^p(\Sigma, H^\beta)$ into $L^p(\Sigma, H^\beta)$. Indeed, for $z\in L^p(\Sigma, H^\beta)$, we have
\begin{align*}
E\|\mathcal{F}_\lambda(z(t))\|^p_{H^\beta}\leq 4^{p-1}\bigg[\sup_{t\in [0,T]}E\|M_\eta(t)z_0\|^p_{H^\beta}+\sup_{t\in [0,T]}\sum_{i=1}^3E\|\Theta_i^z(t)\|^p_{H^\beta}\bigg],
\end{align*}
where
\begin{align*}
\sup_{t\in [0,T]}E\|M_\eta(t)z_0\|^p_{H^\beta}\leq C_\alpha^pT^{\frac{-p\eta\beta}{\alpha}}E\|z_0\|^p,
\end{align*}
\begin{align*}
\sup_{t\in [0,T]}E\|\Theta_1^z(t)\|^p_{H^\beta}&\leq \sup_{t\in [0,T]}E\|\int_0^t(t-r)^{\eta-1}M_{\eta,\eta}(t-r)Gz(r)dr\|^p_{H^\beta}\\
&\leq \sup_{t\in [0,T]}E\|\int_0^t(t-r)^{\eta-1}A_{\beta+1}M_{\eta,\eta}(t-r)A_{-1}Gz(r)dr\|^p\\
&\leq \sup_{t\in [0,T]}C_\eta^p\bigg(\int_0^t(t-r)^{\frac{p[\eta-1-\frac{\eta(\beta+1)}{\alpha}]}{p-1}}dr\bigg)^{p-1}\int_0^tE[\|A_{-1}G(z(r))\|^p]dr\\
&\leq C_\eta^pC_1^p\bigg\{\frac{p-1}{p[\eta-\frac{\eta(\beta+1)}{\alpha}]-1}\bigg\}^{p-1}T^{p[\eta-\frac{\eta(\beta+1)}{\alpha}]-1}\max_{t\in[0,T]}E\|z(t)\|^2,
\end{align*}
\begin{align*}
\sup_{t\in [0,T]}E\|\Theta_2^z(t)\|^p_{H^\beta}&\leq \sup_{t\in [0,T]}E\|\int_0^t(t-r)^{\eta-1}M_{\eta,\eta}(t-r)Cv^\lambda (r,z(r))dr\|^p_{H^\beta}\\
&\leq \sup_{t\in [0,T]}E\|\int_0^t(t-r)^{\eta-1}A_{\beta}M_{\eta,\eta}(t-r)Cv^\lambda (r,z(r))dr\|^p\\
&\leq \sup_{t\in [0,T]}C_\eta^p\bigg(\int_0^t(t-r)^{\frac{p[\eta-1-\frac{\eta\beta}{\alpha}]}{p-1}}dr\bigg)^{p-1}\int_0^tE[\|Cv^\lambda (r,z(r))\|^p]dr\\
&\leq C_\eta^p\|C\|^p\frac{C_v}{\lambda^p}\bigg\{\frac{p-1}{p[\eta-\frac{\eta\beta}{\alpha}]-1}\bigg\}^{p-1}T^{p[\eta-\frac{\eta\beta}{\alpha}]}\bigg[1+\max_{t\in[0,T]}\int_0^tE\|z(r)\|^pdr\bigg],
\end{align*}
and
\begin{align*}
\sup_{t\in [0,T]}E\|\Theta_3^z(t)\|^p_{H^\beta}&\leq \sup_{t\in [0,T]}E\|\int_0^t(t-r)^{\eta-1}M_{\eta,\eta}(t-r)\hbar(r,z(r))dW(r)\|^p_{H^\beta}\\
&\leq \kappa(p)\sup_{t\in [0,T]}E\bigg[\bigg(\int_0^t\|(t-r)^{\eta-1}A_{\beta}M_{\eta,\eta}(t-r)\|^2\|\hbar(r,z(r))\|^2_{L^{2}_0}dr\bigg)^{\frac{p}{2}}\bigg]\\
&\leq \kappa(p)C_\eta^p\bigg(\int_0^t(t-r)^{\frac{2p[\eta-1-\frac{\eta\beta}{\alpha}]}{p-2}}dr\bigg)^{\frac{p-2}{2}}\int_0^tE\|\hbar(r,z(r))\|^p_{L^2_0}dr\\
&\leq \kappa(p)C_\eta^pL_1^p\bigg[\frac{p-2}{p(2\beta-1-\frac{\eta\beta}{\alpha})-2}\bigg]^{\frac{p-2}{2}}T^{\frac{p(2\beta-1-\frac{\eta\beta}{\alpha})-2}{2}}[1+\sup_{t\in[0,T]}E\|z(t)\|^p].
\end{align*}
Hence $E\|\mathcal{F}_\lambda(z(t))\|^p_{H^\beta}\leq \infty$. Using the continuity of the operator $\mathcal{F}_\lambda$, we get that $\mathcal{F}_\lambda z\in H^\beta$.
Hence for each $\lambda>0$, $\mathcal{F}_\lambda$ indeed maps $L^p(\Sigma, H^\beta)$ into itself, as asserted.
Next, for $z,w\in L^p(\Sigma, H^\beta)$, we have
\begin{align*}
E\|(\mathcal{F}_\lambda z)(t)-(\mathcal{F}_\lambda w)(t)\|^p_{H^\beta}&\leq 3^{p-1}\bigg\{E\|\int_0^t(t-r)^{\eta-1}M_{\eta,\eta}(t-r)[Gz(r)-Gw(r)]dr\|^p_{H^\beta}\\
&\quad+E\|\int_0^t(t-r)^{\eta-1}M_{\eta,\eta}(t-r)[Cv^{\lambda}(r,z(r))-Cv^{\lambda}(r,w(r))]dr\|^p_{H^\beta}\\
&\quad +E\|\int_0^t(t-r)^{\eta-1}M_{\eta,\eta}(t-r)[\hbar(r,z(r)-\hbar(r,w(r)))]dW(r)\|^p_{H^{\beta}}\bigg\}\\
&= J_1+J_2+J_3,
\end{align*}
where
\begin{align*}
J_1&=E\|\int_0^t(t-r)^{\eta-1}M_{\eta,\eta}(t-r)[Gz(r)-Gw(r)]dr\|^p_{H^{\beta}}\\
&\leq E\|\int_0^t(t-r)^{\eta-1}A_1M_{\eta,\eta}(t-r)A_{\beta-1}[Gz(r)-Gw(r)]dr\|^p\\
&\leq C_\eta^p E\bigg(\int_0^t(t-r)^{(\eta-1-\frac{\eta}{\alpha})\frac{p}{p-1}}dr\bigg)^{p-1}\bigg(\int_0^tE\|A_{\beta}[Gz(r)-Gw(r)]dr\|^p_{H^{-1}}dr\bigg)\\
&\leq C_\eta^p C_2^p\bigg(\max_{t\in[0,T]}E[\|z(t)\|^p_{H^\beta}]+\max_{t\in[0,T]}E[\|w(t)\|^p_{H^\beta}]\bigg) \bigg[\frac{T^{(\eta-1-\frac{\eta}{\alpha})\frac{p}{p-1}}}{(\eta-1-\frac{\eta}{\alpha})\frac{p}{p-1}}\bigg]^{p-1}\bigg[\int_0^tE\|z(r)-w(r)\|^p_{H^\beta}dr\bigg]\\
&\leq C_\eta^p C_2^p\bigg(\max_{t\in[0,T]}E[\|z(t)\|^p_{H^\beta}]+\max_{t\in[0,T]}E[\|w(t)\|^p_{H^\beta}]\bigg)\bigg[\frac{p-1}{p\eta[1-\frac{1}{\alpha}]-1}\bigg]^{p-1}T^{p\eta[1-\frac{1}{\alpha}]-1}\\
&\quad\quad\quad\times\int_0^tE\|z(r)-w(r)\|^p_{H^\beta}dr,
\end{align*}
\begin{align*}
J_2&=E\|\int_0^t(t-r)^{\eta-1}M_{\eta,\eta}(t-r)[Cv^\lambda(r,z(r))-Cv^\lambda(r,w(r))]dr\|^p_{H^{\beta}}\\
&\leq E\|\int_0^t(t-r)^{\eta-1}M_{\eta,\eta}(t-r)A_{\beta}[Cv^\lambda(r,z(r))-Cv^\lambda(r,w(r))]dr\|^p\\
&\leq C_\eta^p\|C\|^p\bigg(\int_0^t(t-r)^{\frac{(\eta-1)p}{p-1}}dr\bigg)^{p-1}\bigg(\int_0^tE\|v^\lambda(r,z(r))-v^\lambda(r,w(r))\|^p_{H^\beta}\bigg)\\
&\leq C_\eta^p\|C\|^pT^{(p\eta-1)}\bigg[\frac{p-1}{\eta p-1}\bigg]^{p-1}\bigg(\int_0^tE\|v^\lambda(r,z(r))-v^\lambda(r,w(r))\|^p_{H^\beta}\bigg)\\
&\leq C_\eta^p\|C\|^p\frac{C_v}{\lambda^p}T^{(p\eta)}\bigg[\frac{p-1}{\eta p-1}\bigg]^{p-1}\bigg(\int_0^tE\|z(r)-w(r)\|^p_{H^\beta}dr\bigg)\\
\end{align*}
and
\begin{align*}
J_3&=E\|\int_0^t(t-r)^{\eta-1}M_{\eta,\eta}(t-r)[\hbar(r,z(r))-\hbar(r,w(r))]dW(r)\|^p_{H^{\beta}}\\
&\leq \kappa(p)C_\eta^pE\bigg(\int_0^t\|(t-r)^{\eta-1}M_{\eta,\eta}(t-r)A_{\beta}[\hbar(r,z(r))-\hbar(r,w(r))]\|^2_{L_0^2}dr\bigg)^{p/2}\\
&\leq \kappa(p)C_\eta^p\bigg(\int_0^t(t-r)^{\frac{(\eta-1)2p}{p-2}}dr\bigg)^{\frac{p-2}{2}}\bigg(\int_0^tE\|A_{\beta}[\hbar(r,z(r))-\hbar(r,w(r))]\|^p_{L_0^2}dr\bigg)\\
&\leq \kappa(p)C_\eta^pL_2^pT^{\frac{2p\eta-p-2}{2}}\bigg[\frac{p-2}{2\eta p-p-2}\bigg]^{\frac{p-2}{2}}\bigg(\int_0^tE\|z(r)-w(r)\|^p_{H^\beta}\bigg).
\end{align*}
Hence
\begin{align*}
E\|(&\mathcal{F}_\lambda z)(t)-(\mathcal{F}_\lambda w)(t)\|^p_{H^\beta}\\
&\leq 3^{p-1}\bigg[C_\eta^p C_2^p\bigg(\max_{t\in[0,T]}E[\|z(t)\|^p_{H^\beta}]+\max_{t\in[0,T]}E[\|w(t)\|^p_{H^\beta}]\bigg)\bigg[\frac{p-1}{p\eta[1-\frac{1}{\alpha}]-1}\bigg]^{p-1}T^{p\eta[1-\frac{1}{\alpha}]-1}\\
&\quad+C_\eta^p\|C\|^p\frac{C_v}{\lambda^p}T^{(p\eta)}\bigg[\frac{p-1}{\eta p-1}\bigg]^{p-1}+\kappa(p)C_\eta^pL_2^pT^{\frac{2p\eta-p-2}{2}}\bigg[\frac{p-2}{2\eta p-p-2}\bigg]^{\frac{p-2}{2}}\bigg]\\
&\quad\quad\quad\quad\times\int_0^tE\|z(r)-w(r)\|^p_{H^\beta}dr.
\end{align*}
As a result, there is a positive number $\Phi(\lambda)$ such that
\begin{align*}
E\|(&\mathcal{F}_\lambda z)(t)-(\mathcal{F}_\lambda w)(t)\|^p_{H^\beta}\leq \Phi(\lambda)\int_0^tE\|z(r)-w(r)\|^p_{H^\beta}dr.
\end{align*}
Iterating, we see that, for any natural number $n\geq 1$,
\begin{align*}
E\|(&\mathcal{F}_\lambda^n z)(t)-(\mathcal{F}_\lambda^n w)(t)\|^p_{H^\beta}\leq \frac{(T\Phi(\lambda))^n}{n!}\int_0^tE\|z(r)-w(r)\|^p_{H^\beta}dr
\end{align*}
for any $\lambda>0$. When $n$ is sufficiently large (so that $\frac{(T\Phi(\lambda))^n}{n!}<1$),  using Banach's fixed point theorem, we obtain that the operator $\mathcal{F}_\lambda^n$ has a unique fixed point
$u_{\lambda} \in L^p(\Sigma, H^\beta)$.
Since $$\mathcal{F}_\lambda^n(\mathcal{F}_\lambda(u_{\lambda})) = \mathcal{F}_\lambda(\mathcal{F}_\lambda^n(u_{\lambda})) = \mathcal{F}_\lambda(u_{\lambda}),$$
and $\mathcal{F}_\lambda^n$ has a unique fixed point, it follows that $\mathcal{F}_\lambda(u_{\lambda})$ has a unique fixed point i.e.
$\mathcal{F}_\lambda(u_{\lambda})$ is  the unique fixed point of $F_{\lambda}^n$, that is, $F_{\lambda}(u{\lambda}) = u_{\lambda}$,
which is a mild solution to (\ref{BE2}).
\end{proof}

\begin{rmk}
Theorem \ref{MainThm} proves that under the assumptions (1) and (2), the time-fractional SNSE (\ref{BE2}) has a unique mild solution $(z(t))_{t\in[0,T]}$ of the form (\ref{mild}).
\end{rmk}

\begin{theorem}\label{Thmctrl}
If conditions $(1)-(3)$ hold true and the functions $\hbar$ and $G$ are uniformly bounded, then the time-fractional stochastic Navier-Stokes equation (\ref{BE2}) is approximately controllable.
\end{theorem}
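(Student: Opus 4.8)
The plan is to follow the Mahmudov-type resolvent approach for approximate controllability of semilinear stochastic systems. Fix a target state $z_T\in L^p(\Sigma,H^\beta)$ and, for each $\lambda>0$, let $z^\lambda$ denote the unique mild solution of the closed-loop equation guaranteed by Theorem 3.2, driven by the feedback control $v^\lambda(\cdot,z^\lambda)$ defined before Lemma \ref{vlemma}. First I would evaluate the mild solution representation (\ref{mild}) at the terminal time $t=T$, substitute $v^\lambda$, and exploit the definition of the stochastic Grammian $\Upsilon_0^T=L_T L_T^*$ together with the decomposition $z_T=Ez_T+\int_0^T\varphi(s)dW(s)$ provided by \cite{Dauer}. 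Collecting the Grammian terms (handling the variable upper limits via the conditional-expectation structure built into $\Upsilon_0^T$ and a stochastic Fubini manipulation) and using the algebraic identity $\Upsilon_0^T(\lambda I+\Upsilon_0^T)^{-1}=I-\lambda(\lambda I+\Upsilon_0^T)^{-1}$, this yields the crucial identity
\begin{align*}
z^\lambda(T)-z_T = -\lambda(\lambda I+\Upsilon_0^T)^{-1}\Big[&Ez_T-M_\eta(T)z_0+\int_0^T\varphi(r)dW(r)\\
&-\int_0^T(T-r)^{\eta-1}M_{\eta,\eta}(T-r)G(z^\lambda(r))dr\\
&-\int_0^T(T-r)^{\eta-1}M_{\eta,\eta}(T-r)\hbar(r,z^\lambda(r))dW(r)\Big].
\end{align*}
Approximate controllability then reduces to showing that the right-hand side tends to $0$ in $L^p(\Sigma,H)$ as $\lambda\to0^+$, which places $z_T\in\overline{K(T)}$.

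Next I would bound the bracketed quantity uniformly in $\lambda$. Using the assumed uniform boundedness of $G$ and $\hbar$, together with Lemma \ref{inq1}, Hölder's inequality and the Burkholder-type estimate of Lemma \ref{Gundys}, and proceeding exactly as in the estimates of $I_1,I_2$ and $J_1,J_2,J_3$ already carried out, the two integral terms are bounded in $L^p(\Sigma,H)$ by a constant independent of $\lambda$, while $Ez_T-M_\eta(T)z_0+\int_0^T\varphi\,dW$ is a fixed element. Since $p\geq2$ renders $L^p(\Sigma,H)$ reflexive, these uniform bounds furnish, along a subsequence $\lambda_n\to0^+$, weak limits $G(z^{\lambda_n})\rightharpoonup g$ and $\hbar(\cdot,z^{\lambda_n})\rightharpoonup h$ in the appropriate $L^p$ spaces.

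The decisive step is passing to the limit in the integral terms. Because $-A_\alpha$ generates a compact analytic semigroup $S_\alpha(t)$, the operators $M_{\eta,\eta}(t)$ are compact for $t>0$, so the solution operators $w\mapsto\int_0^T(T-r)^{\eta-1}M_{\eta,\eta}(T-r)w(r)\,dr$ are compact and hence map the weakly convergent sequences $G(z^{\lambda_n})$ and $\hbar(\cdot,z^{\lambda_n})$ to \emph{strongly} convergent sequences in $L^p(\Sigma,H)$. This decouples the $\lambda$-dependence: up to a norm-vanishing remainder, the bracket converges strongly to a fixed element $\xi\in L^p(\Sigma,H)$. Invoking condition (3), namely $\lambda(\lambda I+\Upsilon_0^T)^{-1}\to0$ in the strong operator topology as $\lambda\to0^+$, together with the uniform bound $\|\lambda(\lambda I+\Upsilon_0^T)^{-1}\|\leq1$ (valid since $\Upsilon_0^T$ is nonnegative) and the dominated convergence theorem, gives $\lambda(\lambda I+\Upsilon_0^T)^{-1}\xi\to0$, whence $E\|z^{\lambda_n}(T)-z_T\|^p\to0$. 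As the limit does not depend on the subsequence, the full family converges, so $z_T\in\overline{K(T)}$; since $z_T$ was arbitrary, $\overline{K(T)}=L^p(\Sigma,H)$ and (\ref{BE2}) is approximately controllable.

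The main obstacle is precisely the $\lambda$-dependence of the nonlinear and noise terms sitting inside the vanishing resolvent factor: one cannot let $\lambda\to0^+$ naively, because both the factor $\lambda(\lambda I+\Upsilon_0^T)^{-1}$ and the integrands $G(z^\lambda),\hbar(\cdot,z^\lambda)$ vary with $\lambda$. The compactness of $M_{\eta,\eta}(t)$ for $t>0$ is what rescues the argument, converting weak convergence of the uniformly bounded nonlinearities into strong convergence of the corresponding integral terms and thereby permitting the strong-operator convergence supplied by condition (3) to act on a fixed limit.
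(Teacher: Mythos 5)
Your proposal is correct and follows essentially the same route as the paper: the stochastic-Fubini identity for $z_\lambda(T)-z_T$ in terms of the resolvent $\lambda(\lambda I+\Upsilon_0^T)^{-1}$, uniform boundedness of $G$ and $\hbar$ yielding weakly convergent subsequences, compactness of $M_{\eta,\eta}(t)$ upgrading these to strong convergence of the integral terms, and condition (3) together with $\|\lambda(\lambda I+\Upsilon_0^T)^{-1}\|\leq 1$ and dominated convergence to conclude. The only differences are cosmetic refinements on your part (the resolvent placed outside the bracket rather than inside the integrals, and the explicit subsequence-independence remark), not a different argument.
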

\begin{proof}
Let $z_{\lambda}$ be the fixed point of $\mathcal{F}_\lambda$. Using the stochastic Fubini theorem, one can show that the fixed point of $\mathcal{F}_\lambda$ satisfies
\begin{align*}
z_\lambda(T)&=z_T-\lambda(\lambda I+\Upsilon_0^T)^{-1}[Ez_T-M_\eta(T)z_0]+\int_0^T\lambda(\lambda I+\Upsilon_0^T)^{-1}(T-r)^{\eta-1}M_{\eta,\eta}(T-r)G(z_\lambda(r))dr\\
&\quad+\int_0^T\lambda(\lambda I+\Upsilon_0^T)^{-1}[(T-r)^{\eta-1}M_{\eta,\eta}(T-r)\hbar(r,z_\lambda(r))-\varphi(r)]dW(r).
\end{align*}
Since the functions $\hbar$ and $G$ are uniformly bounded, there exist constants $D_1>0$ and $D_2>0$ so that
\begin{align*}
\|\hbar(r,z_\lambda(r))\|^p\leq D_1,
\end{align*}
and
\begin{align*}
\|G(z_\lambda(r))\|^p\leq D_2.
\end{align*}
Therefore we can find a subsequence $\{\hbar(c,z_\lambda(c)), G(z_\lambda(c))\}$ which converges weakly to $\{\hbar(r,z(r)), G(z(r))\}$.\\
Using the Lebesgue dominated convergence theorem, we obtain

\begin{align*}
E\int_0^T\|M_{\eta,\eta}(T-r)[\hbar(r,z_\lambda(r))-\hbar(r,z(r))]\|^pdr\rightarrow 0
\end{align*}
and
\begin{align*}
E\int_0^T\|M_{\eta,\eta}(T-r)[G(z_\lambda(r))-G(z(r))]\|^pdr\rightarrow 0.
\end{align*}
Hence
\begin{align}\label{control1}
E\|z_\lambda(T)-z_T\|^p\leq &6^{p-1}\bigg[\|\lambda(\lambda I+\Upsilon_0^T)^{-1}[Ez_T-M_\eta(T)z_0]\|^p\nonumber\\
&+E\bigg(\int_0^T(T-r)^{\eta-1}\|\lambda(\lambda I+\Upsilon_0^T)^{-1}\varphi(r)\|^p_{L^2_0}dr\bigg)\nonumber\\
&+E\bigg(\int_0^T(T-r)^{\eta-1}\|\lambda(\lambda I+\Upsilon_0^T)^{-1}\||M_{\eta,\eta}(T-r)[G(z_\lambda(r))-G(z(r))]\|dr\bigg)^p\nonumber\\
&+E\bigg(\int_0^T(T-r)^{\eta-1}\|\lambda(\lambda I+\Upsilon_0^T)^{-1}\|\|M_{\eta,\eta}(T-r)G(z(r))\|dr\bigg)^p\nonumber\\
&+E\bigg(\int_0^T\|(T-r)^{\eta-1}\lambda(\lambda I+\Upsilon_0^T)^{-1}M_{\eta,\eta}(T-r)[\hbar(r,z_\lambda(r))-\hbar(r,z(r))]\|^2_{L^2_0}dr\bigg)^{\frac{p}{2}}\nonumber\\
&+E\bigg(\int_0^T\|(T-r)^{\eta-1}\lambda(\lambda I+\Upsilon_0^T)^{-1}M_{\eta,\eta}(T-r)\hbar(r,z(r))\|^2_{L^2_0}dr\bigg)^{\frac{p}{2}}\bigg]
\end{align}
From assumption (3), it follows that the operator $$\lambda(\lambda I+\Upsilon_0^T)^{-1}\rightarrow 0 \; \text{strongly as} \; \lambda\rightarrow 0.$$ In addition, $$\|\lambda(\lambda I+\Upsilon_0^T)^{-1}\|\leq1.$$ Thus, using the Lebesgue dominated convergence theorem and the compactness of the operator $M_{\eta,\eta}$, we obtain $$E\|z_\lambda(T)-z_T\|^p\rightarrow 0 \;\text{as} \;\lambda\rightarrow 0.$$ Hence using the definition of approximate controllability, the time-fractional SNSE (\ref{BE2}) is approximately controllable on $[0,T]$,  as asserted.
\end{proof}

\begin{rmk}
   Theorem \ref{Thmctrl} proves that under assumptions (1)-(3), there exists a control function that steers the solution of the time-fractional stochastic Navier-Stokes equation (\ref{BE2}) from the initial state $z_0$ to the neighborhood of the final state $z_T$ in $[0,T]$.
\end{rmk}

\section{Example}

To demonstrate the applicability of the theoretical results on approximate controllability, we consider the following two-dimensional time-fractional stochastic Navier-Stokes equation in the square domain $\mathcal{O}=(0,1)^2$ with Dirichlet boundary conditions: 
\begin{align}\label{Exm}
\begin{split}
\partial_t^\eta z(t,x,y) + \nu(-\Delta)^{\alpha/2} z(t,x,y) 
&= C v(t,x,y) + \sigma z(t,x,y) \frac{dW(t)}{dt}, \quad \forall \, t\in (0,1]\\
z(0,x,y) = z_0(x,y)&=  \sin(\pi x)\sin(\pi y),
\end{split}
\end{align}

where $z(t,x,y)$ denotes the unknown variable, $\eta = 0.7$ is the fractional time derivative order, $\alpha = 1.8$ is the fractional Laplacian order, $\nu = 0.1$ is the viscosity coefficient,  $\sigma = 0.05$ is the noise intensity, the control operator $C=I$ and $W(t)$ is a standard Wiener process. The control input is

\[
 v(t,x,y) =\sin(\pi x)\sin(\pi y), \quad t \in [0,1].
\]
The target state is the velocity field pattern induced by the control input, i.e., the first Fourier mode $\sin(\pi x)\sin(\pi y)$. 

Setting $A=\nu(-\Delta)^{\alpha/2}$, the mild solution of (\ref{Exm}) is given by
\[
z(t)=E_{\eta}(-A t^{\eta})z_0+\int_0^t (t-s)^{\eta-1}E_{\eta,\eta}(-A (t-s)^{\eta}) v(s)\,ds
+\sigma\int_0^t (t-s)^{\eta-1}E_{\eta,\eta}(-A (t-s)^{\eta}) z(s)\,dW(s).
\]
Here $E_{\eta,\beta}$ denotes the Mittag--Leffler function and the last term is an Itô stochastic convolution. Under the assumptions stated in Section~2, analyticity of $A$, admissibility of $C$, linear growth and Lipschitz condition on the noise coefficient, this integral equation admits a unique mild solution in $L^{2}(\Omega;C([0,T];H))$.

\subsection{Numerical Scheme}

\begin{enumerate}
    \item \textbf{Fractional derivative:} We approximated the fractional derivative using the Grünwald–Letnikov formula with recursive computation of weights:
    \[
    w_0 = 1, \quad w_k = \left(1 - \frac{\eta+1}{k}\right) w_{k-1}, \quad k \ge 1.
    \]
    
    \item \textbf{Fractional Laplacian:} We approximated the fractional Laplacian spectrally. For a velocity field $z(x,y)$ expanded in sine basis, the fractional Laplacian is applied in Fourier space:
    \[
    (-\Delta)^{\alpha/2} z(x,y) = \sum_{m,n} (\pi^2(m^2+n^2))^{\alpha/2} \hat{z}_{m,n} \sin(m \pi x) \sin(n \pi y),
    \]
    where $\hat{z}_{m,n}$ are the Fourier coefficients.
    
    \item \textbf{Noise term:} Discretized as
    \[
    \sigma z^n \frac{\Delta W^n}{\Delta t}, \quad \Delta W^n \sim \mathcal{N}(0, \Delta t),
    \]
    with independent realizations at each spatial point and time step.
    
    \item \textbf{Time integration:} Semi-implicit Euler scheme is used:
    \[
    z^{n+1} = z^n + \Delta t \big(-\nu (-\Delta)^{\alpha/2} z^n + v^n + \text{noise} - \text{fractional derivative term}\big).
    \]
\end{enumerate}

\subsection{Results and Interpretation}

Figure~\ref{fig:L2norm} shows the evolution of the $L^2$ norm of the velocity field $\|z(t,\cdot,\cdot)\|_{L^2}$ with and without control over time $t\in [0,1]$. This clearly demonstrates that the controlled trajectory of the velocity field approaches the desired target pattern $\sin(\pi x)\sin(\pi y)$ over time. Despite the presence of stochastic perturbations and memory effects, the applied control successfully guides the system close to the intended state. This illustrates the approximate controllability of the time-fractional stochastic Navier–Stokes system, confirming the theoretical results derived in Section 3.

\begin{figure}[h!]
    \centering
    \includegraphics[width=0.7\textwidth]{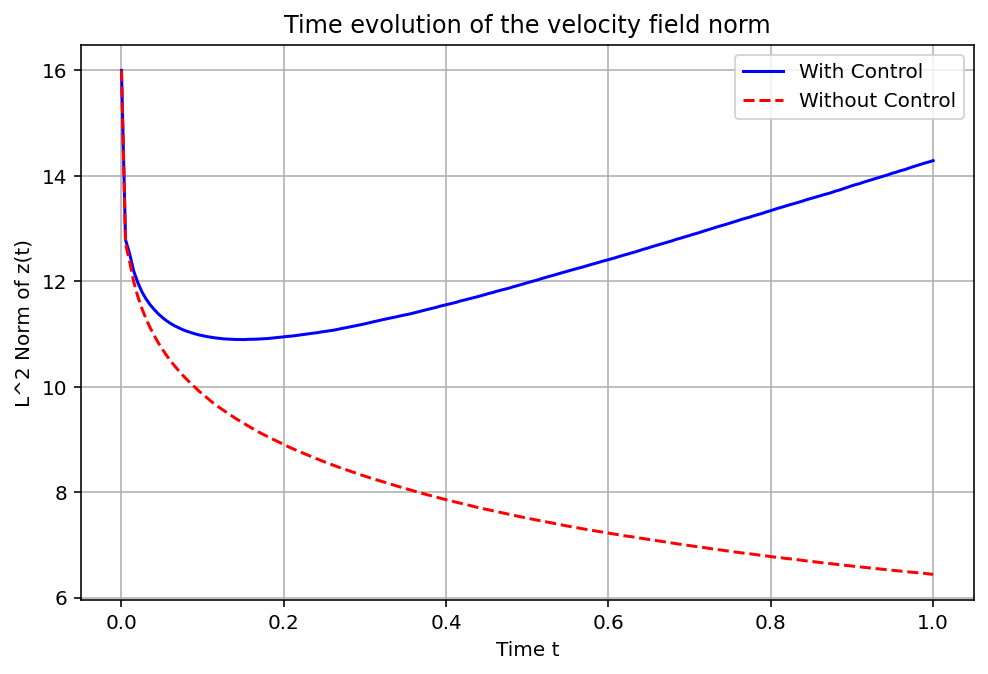}
    \caption{Time evolution of the $L^2$ norm of the velocity field $\|z(t,\cdot,\cdot)\|_{L^2}$ under the controlled and uncontrolled stochastic fractional Navier–Stokes dynamics.}
    \label{fig:L2norm}
\end{figure}

\text{}

\noindent {\bf  Declarations}\\
 
\text{}
\textbf{{\small
 Funding}} Simeon Reich was partially supported by the Israel Science Foundation (Grant 820/17), the Fund for the Promotion of Research at the Technion (Grant 2001893) and by the Technion General Research Fund (Grant 2016723). The work of J.J. Nieto has been partially supported by the Agencia Estatal de Investigación (Spain), Grant PID2020-113275GB-I00 funded by MCIN/AEI/ 10.13039/501100011033 and by “ERDF A way of making Europe”, as well as Xunta de Galicia grant ED431C 2019/02 for Competitive Reference Research Groups (2019-22).\\

\noindent 
\text{}
\textbf{{\small
 Conflict of interest}} The authors declare that they have no conflict of interest.

\bibliographystyle{plain}
\bibliography{sample}

\end{document}